\newtheorem{theorem}{Theorem}[section]
\newtheorem{proposition}[theorem]{Proposition}
\newtheorem{lemma}[theorem]{Lemma}
\theoremstyle{definition}
\newtheorem{definition}[theorem]{Definition}
\newtheorem{construction}[theorem]{Construction}
\theoremstyle{remark}
\newtheorem{remark}[theorem]{Remark}
\numberwithin{equation}{section}
\newcommand{\R}{\mathbb{R}}
\newcommand\C{\mathbb{C}}
\newcommand\N{\mathbb{N}}
\renewcommand\H{\mathbb{H}}
\newcommand{\unicodecurlyB}{ℬ}
\DeclareMathOperator{\Spec}{Spec}
\DeclareMathOperator{\im}{im}
\DeclareMathOperator{\id}{id}
\DeclareMathOperator{\alg}{alg}
\DeclarePairedDelimiter{\ip}{\langle}{\rangle}
\DeclarePairedDelimiter{\norm}{\lVert}{\rVert}
\def\semicolon{;}
\def\applytolist#1{
	\expandafter\def\csname multi#1\endcsname##1{
		\def\multiack{##1}\ifx\multiack\semicolon
		\def\next{\relax}
		\else
		\csname #1\endcsname{##1}
		\def\next{\csname multi#1\endcsname}
		\fi
		\next}
	\csname multi#1\endcsname}
\def\calc#1{\expandafter\def\csname c#1\endcsname{{\mathcal #1}}}
\def\bbc#1{\expandafter\def\csname b#1\endcsname{{\mathbb #1}}}
\def\fc#1{\expandafter\def\csname f#1\endcsname{{\mathfrak #1}}}
\def\rc#1{\expandafter\def\csname r#1\endcsname{{\mathrm #1}}}
\title[Models and subordination for operator-valued free convolution powers]{Operator models and analytic subordination for operator-valued free convolution powers}
\begin{document}
	
	\author{Ian Charlesworth$^\circ$}
	\address{$^\circ$School of Mathematics, Cardiff University, United Kingdom}
	\email{\href{mailto:charlesworthi@cardiff.ac.uk}{charlesworthi@cardiff.ac.uk}}
	
	\author{David Jekel$^\bullet$}
	\address{$^\bullet$Department of Mathematical Sciences, University of Copenhagen, Denmark}
	\email{\href{mailto:daj@math.ku.dk}{daj@math.ku.dk}}
	\urladdr{https://davidjekel.com}

	\begin{abstract}
		We revisit the theory of operator-valued free convolution powers given by a completely positive map $\eta$.  We first give a general result, with a new analytic proof, that the $\eta$-convolution power of the law of $X$ is realized by $V^*XV$ for any operator $V$ satisfying certain conditions, which unifies Nica and Speicher's construction in the scalar-valued setting and Shlyakhtenko's construction in the operator-valued setting.  Second, we provide an analog, for the setting of $\eta$-valued convolution powers, of the analytic subordination for conditional expectations that holds for additive free convolution.  Finally, we describe a Hilbert-space manipulation that explains the equivalence between the $n$-fold additive free convolution and the convolution power with respect to $\eta = n \id$.
	\end{abstract}
	\maketitle
	
	\section{Introduction}
	
	The classical convolution of probability measures describes the distribution of the sum of independent variables realizing the measures.
	For a probability measure $\mu$, one can therefore make sense of $\mu^{\ast n}$ for $n \in \N$ as the sum of $n$ independent random variables with identical distribution $\mu$.
	Since convolution of probability measures corresponds to multiplication of their Fourier transforms, there is an immediate candidate to define as $\mu^{\ast t}$ for $t > 0$ by taking an appropriate power on the Fourier transform side; however, even if one begins with a probability measure there is not in general a guarantee that the output of this operation is a positive measure of total mass $1$.
	This does happen, however, for the so-call \emph{infinitely divisible} probability measures: for such a measure $\mu$ one has a L\'evy process consisting of measures $(\mu^{\ast t})_{t > 0}$ which in particular is a semi-group with respect to convolution.
	
	In the non-commutative setting of free probability, one likewise has a notion of free convolution: the law $\mu \boxplus \nu$ is the law of the sum of freely independent random variables having laws $\mu$ and $\nu$ individually.
	Corresponding to the Fourier transform of measures is Voiculescu's $R$-transform, which associates to a probability measure $\mu$ an analytic function $R_\mu$ and linearizes free convolution: $R_{\mu \boxplus \nu} = R_\mu + R_\nu$.
	(Strictly speaking, $R$ is therefore analogous to the logarithm of the Fourier transform.)
	One arrives at the notion of $\boxplus$-infinitely divisible law $\mu$, namely a law for which $t R_\mu$ is the $R$-transform of a probability measure for every $t > 0$, and a corresponding free L\'evy process.
	The central limit distributions in these settings are prime examples of infinite divisibility: if $\gamma_t$ is the centered Gaussian distribution of variance $t$, one has $\gamma_t = \gamma_1^{*t}$; similarly, if $\sigma_t$ is the centered semi-circular distribution of variance $t$, one has $\sigma_t = \sigma_1^{\boxplus t}$.
	These form semi-groups with respect to $*$ and $\boxplus$, respectively.  A result of Bercovici and Pata shows that there is a deep connection between the $\boxplus$-infinitely divisible laws and the $*$-infinitely divisible laws: there is a bijection between these two sets of laws which preserves domains of partial attraction \cite{BP1999}.
	
	Surprisingly, for every probability measure $\mu$ and every $t \geq 1$, the function $tR_\mu$ is the $R$-transform of some probability measure $\mu^{\boxplus t}$, which is called the $t^{\text{th}}$ \emph{free convolution power of $\mu$}; this is in stark contrast to the classical setting where real-valued convolution powers often do not exist.
	This fact was first shown for sufficiently large $t$ by Bercovici and Voiculescu \cite{BV1995}, and then proved for all $t \geq 1$ by Nica and Speicher \cite{NS1996}.
	Moreover, Nica and Speicher were able to produce an explicit model of these distributions: if $x = x^*$ is an element of a tracial non-commutative probability space $(\cA, \phi)$ with law $\mu$ and $p \in \cA$ is a projection freely independent from $x$ with $\phi(p) = 1/t$, then $pxp$ has law $\mu^{\boxplus t}$ in the non-commutative probability space $(p\cA p, t \phi|_{p\cA p})$.
	
	This paper studies free convolution powers in the setting of operator-valued free probability, where the algebra of scalars $\bC$ is replaced by a $\rC^*$-algebra $\cB$ and the state by a conditional expectation onto $\cB$.
	In this setting, the central limit distribution -- the semi-circular law -- is still characterized by its first and second moment, but the second moment is now a completely positive map $\eta : \cB \to \cB$ \cite{Voiculescu1995}.
	Motivated by scalar-valued free convolution powers, Anshelevich, Belinschi, Fevrier, and Nica showed in \cite{ABFN2013} that there is a natural operation $\mu \mapsto \mu^{\boxplus \eta}$ with the desirable property that if $\sigma$ is a $\cB$-valued semicircular distribution with variance $\id_\cB$ then $\sigma^{\boxplus \eta}$ is the $\cB$-valued semicircular distribution with variance $\eta$.
	Their convolution is defined for arbitrary $\cB$-valued distributions $\mu$ when $\eta - \id_\cB$ is completely positive (mimicking the scalar-valued condition that $t \geq 1$), and for certain distributions for arbitrary $\eta$ completely positive.
	It was later shown by Shlyakhtenko \cite{Shlyakhtenko2013} that the condition $\eta - \id_\cB$ completely positive is necessary to ensure that the $\eta$-convolution $\cdot^{\boxplus \eta}$ preserves positivity for all (multivariate) distributions.
	The precise definition of $\eta$-convolution is given at the start of Section~\ref{sec:opconv}, but it essentially amounts to a condition on $\cB$-valued $R$-transforms: that
	\[ R_{\mu^{\boxplus\eta}} = \eta\circ R_\mu \]
	and a similar condition for matrix amplifications, on an appropriate domain.
	(In the scalar-valued case, the completely positive maps $\bC \to \bC$ are just multiplication by elements of $[0, \infty)$, and so this extends the usual notion of free convolution powers.)

	Shlyakhtenko exhibited in \cite{Shlyakhtenko2013} a construction of $\eta$-convolution powers: given $\cB$ and $\eta : \cB \to \cB$ such that $\eta - \id_{\cB}$ is completely positive, he produces an operator $V$ living on a free Fock space so that whenever $\mu$ is a $\cB$-valued distribution realized by a variable $X$ free from $V$ (in a potentially larger space), $V^* X V$ has distribution $\mu^{\boxplus\eta}$.
	
	Our first main theorem, Theorem~\ref{thm: realization}, gives conditions for an operator $V$ which guarantee that, when $V$ is free from $X$ which has distribution $\mu$, the distribution of $V^*XV$ is $\mu^{\boxplus\eta}$.
	Shlyakhtenko's construction satisfies the conditions of our theorem, but the properties we demand are weaker than those needed for the proof used in \cite{Shlyakhtenko2013}.
	In particular, this allows us to construct a suitable operator $V$ living on $\cB\oplus \cB\otimes_{\eta-\id}\cB$ (see Lemma~\ref{lem:amodel} and the preceding discussion), and to make the connection to the scalar case more explicit.  The main stroke of our proof is as follows: writing $\nu$ for the distribution of $V^*XV$, we show that $\eta\circ R_\mu$ satisfies the functional equation defining $R_{\nu}$ in terms of its Cauchy transform $G_\nu$, implying that $\nu = \mu^{\boxplus \eta}$.  This approach is inspired by the proof of the additivity of the (scalar) $R$-transform over $\boxplus$ given by Haagerup \cite[\S 3]{Haagerup1997} and Lehner \cite[Theorem 3.1]{Lehner2001}, which was also adapted to the operator-valued case by Dykema \cite[Lemma 4.5]{Dykema2006}.
	
	Our second main result Theorem \ref{thm: subordination} gives a formula relating analytic subordination and conditional expectations in the setting of operator-valued convolution powers.
	Analytic subordination of Cauchy transforms has been an important tool for studying additive free convolution, beginning with the works of Voiculescu \cite{Voiculescu1993} and Biane \cite{Biane1998}.
	Given probability measures $\mu, \nu$ on $\R$ there are analytic functions $\omega_\mu, \omega_\nu$ such that
	\[G_\mu\circ \omega_\mu(z) = G_\nu\circ\omega_\nu(z) = G_{\mu\boxplus\nu}(z).\]
	The theory was further developed and codified by Voiculescu in \cite{Voiculescu2002b}.
	A key point in this theory is that for free variables $X$ and $Y$, the conditional expectation of a resolvent in $X+Y$ onto the von Neumann algebra generated by $X$ is a resolvent in $X$, namely, $E_{\mathrm{W}^*(X)}[(z - X+Y)^{-1}] = (\omega_\mu(z) - X)^{-1}$. 
	The theory of analytic subordination for additive free convolution was adapted to the $\mathcal{B}$-valued setting in \cite{Voiculescu2002b,BMS2017,Liu2021,BB2022}.  Our Theorem \ref{thm: subordination} gives that an analogous conditional expectation formula holds in the setting of operator-valued convolution powers with respect to a completely positive map $\eta$, even though this cannot always be realized as a free sum.
	Namely, if $X$ and $V$ are as in Theorem \ref{thm: realization} and $\mathcal{E}_1$ is the conditional expectation onto the algebra generated by $X$, then
	\[
	\mathcal{E}_1[V(z - V^*XV)^{-1}V^*] = (F(z) - X)^{-1},
	\]
	where $F = G_\mu^{-1} \circ G_{\mu^{\boxplus \eta}}$ is the $\mathcal{B}$-valued subordination function.
	
	As a final point of interest, we examine the particular case of $n$-fold convolution with $n \in \N$ in \S \ref{sec: convolution sums}.  The distribution of the $n$-fold free convolution can be modeled in two ways, either by the sum of $n$ copies of the original variable that are freely independent over $\cB$, or as a convolution power corresponding to the completely positive map $\eta = n \id$.  We give an explicit transformation of the $\cB$-$\cB$-correspondences that relates these two models, hence obtaining a new heuristic for why these operations should be the same.

	The structure of this paper is as follows.
	In Section~\ref{sec:prelims} we recall much of the background material needed for the rest of the paper and establish notation.
	Following that, Section~\ref{sec:opconv} contains our realization of operator-valued convolution powers.
	Section~\ref{sec:condexp} shows that our model can be used to produce subordination functions in a manner analogous to the scalar-valued setting, satisfying the expected conditional expectation relation.
	Finally, in Section~\ref{sec: convolution sums} we investigate the particular case where $\eta = n\operatorname{id}$ and show that an isomorphism of the underlying $\cB$-$\cB$-correspondence produces the familiar model of $n$ freely independent variables being summed.
	
	\subsection*{Acknowledgements}
	
	We thank Roland Speicher for discussions early in the development of this work.  We thank the Fields Institute for their hospitality during the Fall 2023 thematic program on operator algebras that supported our collaboration.  We thank the Mathematische Forschungsinstitut Oberwolfach for hosting us during the workshop ``Non-commutative Function Theory and Free Probability'' in Spring 2024 where these results were first presented.  DJ was partially supported by the Fields Institute, the National Science Foundation (US; grant DMS-2002826), the Natural Sciences and Engineering Research Council (Canada; grant RGPIN-2017-05650), the Danmarks Frie Forskningsfond (Denmark; grant 1026-00371B), and the Horizon Marie-Sk{\l}odowska--Curie Actions (FRANCO, grant 101209517).  IC received travel support from the Fields Institute.  We thank the referees for their helpful comments.
	
	\section{Preliminaries}
	\label{sec:prelims}
	
	In this section, we review the necessary background on $\cB$-valued non-commutative probability theory.  For general background on free probability, see \cite[\S 5]{AGZ2009}, \cite{MS2017}, and for the operator-valued setting, see \cite{Speicher1998}, \cite{BMS2017}, \cite[\S 2-5]{JekelThesis}.
	
	\subsection{\texorpdfstring{$\cB$}{\unicodecurlyB}-valued non-commutative probability spaces} \label{sec: B prob space}
	
	In non-commutative probability theory, one replaces the (commutative) algebra $L^\infty$ of a probability space with a general unital $\mathrm{C}^*$ or von Neumann algebra $\cA$.  The expectation is then replaced by a state $\phi: \cA \to \C$.
	Often $\cA$ is represented concretely as bounded operators on a Hilbert space $H$, and the state $\phi$ is given by a vector $\xi$, i.e.\ $\phi(a) = \ip{\xi, a \xi}$.
	Even if $(\cA,\phi)$ is given abstractly without a particular representation on a Hilbert space, a Hilbert space $H_\phi$ can be obtained by the GNS construction equipping $\cA$ with the sesquilinear form $\ip{a,b}_\phi = \phi(a^*b)$, and there is a representation $\pi_\phi: A \to B(H_\phi)$ given by left multiplication, such that $\phi(a) = \ip{1,\pi_\phi(a) 1}_{H_\phi}$.
	
	We work in the setting of $\cB$-valued non-commutative probability, that is, we are given a $\mathrm{C}^*$-algebra $\cB$, and all the scalar-valued quantities such as the expectation and the inner product on a Hilbert space are replaced by $\cB$-valued versions.  We assume familiarity with the basic theory of unital $\mathrm{C}^*$-algebras, matrices over a $\mathrm{C}^*$-algebra, and completely positive maps.  We refer to \cite[Chapter II]{Blackadar2006} for a summary of results and references.
	
	\begin{definition}[$\cB$-valued probability space]
		Let $\cB$ be a unital $\mathrm{C}^*$-algebra.  A \emph{$\cB$-valued probability space} is a pair $(\cA \supseteq \cB, E: \cA \to \cB)$ where $\cA$ is a unital $\mathrm{C}^*$-algebra and $\cB \subseteq \cA$ is a unital inclusion (injective $*$-homomorphism), and $E: \cA \to \cB$ is a \emph{conditional expectation}, that is,
		\begin{itemize}
			\item $E|_{\cB} = \id_{\cB}$.
			\item $E$ is a $\cB$-$\cB$-bimodule map, so $E[b_1ab_2] = b_1 E[a] b_2$ for $a \in \mathcal{A}$ and $b_1, b_2 \in \mathcal{B}$.
			\item $E$ is completely positive, that is, if $n \geq 1$ and $A \in M_n(\cA)$, then $E^{(n)}[A^*A] \geq 0$ in $M_n(\cB)$, where $E^{(n)}: M_n(\cA) \to M_n(\cB)$ is the entrywise application of $E$.
		\end{itemize}
	\end{definition}
	
	Next, we recall $\mathrm{C}^*$-correspondences, the $\cB$-valued analog of representations on Hilbert spaces; this will be important in \S \ref{sec: convolution sums} where we perform ``$\cB$-valued Hilbert space'' manipulations.
	For background on correspondences, see \cite{Paschke1973}, \cite{Lance1995}, \cite[\S II.7.1 - II.7.2]{Blackadar2006}, and the references therein.  We start with right Hilbert $\mathcal{B}$-modules, which are an analog of Hilbert spaces with a $\cB$-valued right $\cB$-linear inner product.
	
	\begin{definition}
		Let $\mathcal{B}$ be a unital $\mathrm{C}^*$-algebra.  If $\mathcal{H}$ is a right $\mathcal{B}$-module, then a \emph{$\mathcal{B}$-valued semi-inner product} is a map $\ip{\cdot,\cdot}: \mathcal{H} \times \mathcal{H} \to \mathcal{B}$ such that for $h_1$, $h_2 \in \mathcal{H}$:
		\begin{enumerate}[(1)]
			\item $h_2 \mapsto \ip{h_1,h_2}$ is a right $\mathcal{B}$-module map;
			\item $\ip{h_2,h_1} = \ip{h_1,h_2}^*$; and
			\item $\ip{h_1,h_1} \geq 0$.
		\end{enumerate}
	\end{definition}
	
	One can show that the semi-inner product must satisfy an analogue of the Cauchy--Schwarz inequality and hence $\norm{h} := \norm{\ip{h,h}}_{\mathcal{B}}^{1/2}$ defines a semi-norm on $\mathcal{H}$.  We also have $\norm{hb} \leq \norm{h} \norm{b}_{\cB}$ for $h \in \mathcal{H}$ and $b \in \mathcal{B}$.
	
	\begin{definition}
		If this semi-norm is in fact a norm which makes $\mathcal{H}$ a Banach space, then we say that $\mathcal{H}$ is a \emph{right Hilbert $\mathcal{B}$-module}.
		In general, if $\mathcal{H}$ has a $\mathcal{B}$-valued semi-inner product, then the completion of $\mathcal{H} / \{h: \norm{h} = 0\}$ is a right Hilbert $\mathcal{B}$-module with the right $\mathcal{B}$-action and the $\mathcal{B}$-valued inner product induced in the natural way from those of $\mathcal{H}$.  We refer to this module as the \emph{separation-completion} of $\mathcal{H}$ with respect to $\ip{\cdot,\cdot}$.
	\end{definition}
	
	\begin{definition}
		Let $\mathcal{H}_1$ and $\mathcal{H}_2$ be Hilbert $\mathcal{B}$-modules. We say that a linear map $T: \mathcal{H}_1 \to \mathcal{H}_2$ is \emph{right $\mathcal{B}$-modular} if $T(hb) = (Th)b$ for $h \in \mathcal{H}_1$ and $b \in \mathcal{B}$.  We say that $T$ is \emph{adjointable} if there exists a map $T^*: \mathcal{H}_2 \to \mathcal{H}_1$ such that
		\[
		\ip{Th_1,h_2} = \ip{h_1,T^*h_2} \text{ for all } h_1 \in \mathcal{H}_1 \text{ and } h_2 \in \mathcal{H}_2.
		\]
		We denote by $\mathcal{L}(\mathcal{H})$ the space of bounded, right $\mathcal{B}$-modular, adjointable operators on a right Hilbert $\mathcal{B}$-module $\mathcal{H}$.
		When we want to emphasize the right $\cB$-module structure of $\cH$, we may write $\cH_\cB$ and $\cL(\cH_\cB)$.
	\end{definition}
	
	Unlike the case of scalar-valued Hilbert spaces, adjointability is \emph{not} automatic.  One can check that $\mathcal{L}(\mathcal{H})$ is a $\mathrm{C}^*$-algebra (see for instance \cite[p.\ 8]{Lance1995}).  Now we can define the $\cB$-valued analog of representations on a Hilbert space.
	
	\begin{definition} \label{def: correspondence}
		Let $\cA$ and $\cB$ be $\mathrm{C}^*$-algebras.  An \emph{$\cA$-$\cB$-correspondence} is a right Hilbert $\cB$-module $\mathcal{H}$ together with a $*$-homomorphism $\pi: \mathcal{A} \to \mathcal{L}(\mathcal{H})$.
		Such a representation endows $\mathcal{H}$ with the structure of an $\mathcal{A}$-$\mathcal{B}$-bimodule, and so we will write $a \xi b$ for $\pi(a)[\xi b]$ where $\xi \in \mathcal{H}$, $a \in \cA$, and $b \in \cB$.
	\end{definition}
	
	A completely positive map $\psi: \cA \to \cB$ naturally gives rise to an $\cA$-$\cB$-correspondence $\cA \otimes_{\psi} \cB$ as follows.
	
	\begin{lemma}[{\cite[Theorem 5.2]{Paschke1973}, see also \cite[Theorem 5.6]{Lance1995}}] \label{lem: tensor product over CP map}
		Let $\cA$ and $\cB$ be $\mathrm{C}^*$-algebras and $\psi: \cA \to \cB$ be completely positive.  On the algebraic tensor product $\cA \otimes_{\alg} \cB$, define a $\cB$-valued sesquilinear map by
		\[
		\ip{a \otimes b, a' \otimes b'}_\psi = b^* \psi(a^*a') b'.
		\]
		Then $\ip{\cdot,\cdot}_\psi$ is a $\cB$-valued semi-inner product.  The associated separation-completion $\cA \otimes_\psi \cB$ has an $\cA$-$\cB$-correspondence structure where the left action of $\cA$ is given on simple tensors by $a (a' \otimes b') = aa' \otimes b'$.
	\end{lemma}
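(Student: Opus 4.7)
The plan is to verify the three axioms of a $\cB$-valued pre-inner product in turn and then construct the left action. I would first observe that the formula on simple tensors extends $\C$-sesquilinearly (antilinear in the first slot, linear in the second) by the universal property of the algebraic tensor product; compatibility of the two extensions is a consequence of the conjugate-linearity of the involution on $\cA$ together with conjugate-linearity in one slot and linearity in the other on the $\cB$ side.

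Right $\cB$-linearity in the second variable is built into the defining formula, and Hermitian symmetry $\ip{\eta,\xi}_\psi = \ip{\xi,\eta}_\psi^*$ follows on simple tensors from the fact that a completely positive map is $*$-preserving. For the positivity axiom, given a representative $\xi = \sum_{i=1}^n a_i \otimes b_i$, I would arrange the $a_i$ into a row vector $A = (a_1, \ldots, a_n) \in M_{1,n}(\cA)$ and the $b_i$ into a column vector $B \in M_{n,1}(\cB)$, so that
\[\ip{\xi, \xi}_\psi = B^* \psi^{(n)}(A^*A) B.\]
Since $A^*A \geq 0$ in $M_n(\cA)$ and $\psi$ is completely positive, this expression is positive in $\cB$.

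The main technical point is to show that the prescription $\pi(a)(a' \otimes b') = (aa') \otimes b'$ extends to a bounded operator on $\cA \otimes_\psi \cB$. For this I would rely on the $\cB$-valued inequality
\[\ip{\pi(a)\xi, \pi(a)\xi}_\psi \leq \|a\|^2 \ip{\xi, \xi}_\psi,\]
which reduces the operator bound $\|\pi(a)\xi\| \leq \|a\| \cdot \|\xi\|$ to the positivity already established. The inequality itself is obtained by writing $\|a\|^2 - a^*a = c^*c$ for some $c \in \cA$, observing that
\[\|a\|^2 A^*A - A^*(a^*a)A = (cA)^*(cA) \geq 0 \quad \text{in } M_n(\cA),\]
applying $\psi^{(n)}$, and sandwiching with $B^*, B$. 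Consequently $\pi(a)$ respects the null subspace of the semi-inner product and extends continuously to the separation-completion.

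Finally, multiplicativity $\pi(aa') = \pi(a)\pi(a')$ is immediate on simple tensors, $\pi(a)$ is visibly right $\cB$-modular since $a$ acts only on the left slot, and adjointability with $\pi(a)^* = \pi(a^*)$ is the straightforward observation that $\psi((aa_1)^* a_2) = \psi(a_1^* (a^* a_2))$, extended sesquilinearly. Together these show that $\pi$ is a $*$-homomorphism into $\cL(\cA \otimes_\psi \cB)$, equipping $\cA \otimes_\psi \cB$ with the promised $\cA$-$\cB$-correspondence structure.
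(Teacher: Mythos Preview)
Your argument is correct and follows the standard route found in the references the paper cites (Paschke and Lance). Note that the paper itself does not give a proof of this lemma---it simply states the result with attribution---so your write-up in fact supplies more detail than the paper does, and matches the classical KSGNS-type construction those sources present.
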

	
	Next, we discuss the relationship between $\cB$-valued correspondences and $\cB$-valued probability spaces.  Suppose $\cB \subseteq \cA$ and $\mathcal{H}$ is an $\cA$-$\cB$-correspondence.  If $\xi \in \mathcal{H}$, then under what conditions is the map $a \mapsto \ip{\xi, a \xi}$ a conditional expectation?
	The following is proved in 
	\cite[Lemma 2.10]{JekelLiu2020}, and part of it was given also in \cite[Proposition 2.10]{Liu2021}.
	
	\begin{lemma} \label{lem:unitvector}
		Let $\mathcal{B} \subseteq \mathcal{A}$ be a unital inclusion of unital $\mathrm{C}^*$-algebras.  Let $\mathcal{H}$ be a $\mathcal{A}$-$\mathcal{B}$-correspondence and $\xi \in \mathcal{H}$ and define $E: \mathcal{A} \to \mathcal{B}$ by $E(a) := \ip{\xi, a \xi}$.  Then the following are equivalent:
		\begin{enumerate}[(1)]
			\item $E$ is a $\mathcal{B}$-valued conditional expectation.
			\item $\ip{\xi, b\xi} = b$ for every $b \in \mathcal{B}$.
			\item $\ip{\xi,\xi} = 1$ and $b \xi = \xi b$ for every $b \in \mathcal{B}$. 
		\end{enumerate}
	\end{lemma}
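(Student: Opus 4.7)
The plan is to prove the equivalences in the cyclic order $(1)\Rightarrow(2)\Rightarrow(3)\Rightarrow(1)$, with the central observation being a Cauchy--Schwarz-style identity that forces $b\xi = \xi b$.

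The implication $(1)\Rightarrow(2)$ is immediate: since a conditional expectation restricts to the identity on $\mathcal{B}$, we have $\langle \xi, b\xi\rangle = E(b) = b$ for every $b\in\mathcal{B}$.

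For $(2)\Rightarrow(3)$, I would first take $b=1$ to obtain $\langle\xi,\xi\rangle = 1$. The heart of the matter is then to compute $\langle b\xi - \xi b, b\xi - \xi b\rangle$ and show it equals $0$. Expanding using right $\mathcal{B}$-linearity/antilinearity of the inner product in the second/first slot, and using that the left $\mathcal{A}$-action is by adjointable operators with $(\pi(b))^* = \pi(b^*)$, we get
\[
\langle b\xi - \xi b, b\xi - \xi b\rangle = \langle \xi, b^*b\xi\rangle - \langle\xi, b^*\xi b\rangle - b^*\langle \xi, b\xi\rangle + b^*\langle\xi,\xi\rangle b.
\]
Each of the four terms equals $b^*b$ by hypothesis (2) (using $\langle\xi,b^*\xi\rangle = b^*$ and the right-linearity $\langle\xi,b^*\xi b\rangle = \langle\xi,b^*\xi\rangle b$), so the norm-square vanishes. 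Since $\mathcal{H}$ is a right Hilbert $\mathcal{B}$-module (not merely a semi-inner-product space), vanishing norm implies $b\xi = \xi b$. This is the only step that requires a real calculation; the main obstacle is just keeping track of where $*$ lands on the left versus right, so I would spell out the four terms carefully.

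For $(3)\Rightarrow(1)$, I need to verify the three defining properties of a conditional expectation. The unit property is $E(b) = \langle \xi, b\xi\rangle = \langle \xi, \xi b\rangle = \langle\xi,\xi\rangle b = b$. The bimodule property follows from the identity $b\xi = \xi b$ together with adjointability: $E(b_1 a b_2) = \langle \xi, b_1 a \xi b_2\rangle = \langle b_1^*\xi, a\xi\rangle b_2 = \langle \xi b_1^*, a\xi\rangle b_2 = b_1\langle \xi, a\xi\rangle b_2 = b_1 E(a) b_2$. For complete positivity, given $A = (a_{ij})\in M_n(\mathcal{A})$, set $\eta_k = (a_{kj}\xi)_{j=1}^n \in \mathcal{H}^n$ (viewed as a right Hilbert $\mathcal{B}$-module with $M_n(\mathcal{B})$-valued inner product $(\langle v_i, w_j\rangle)_{ij}$); then a direct computation shows $\langle \eta_k,\eta_k\rangle = (\langle \xi, a_{ki}^* a_{kj}\xi\rangle)_{ij}$, and summing over $k$ yields $E^{(n)}(A^*A) = \sum_k \langle\eta_k,\eta_k\rangle \geq 0$. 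Boundedness of $E$ is automatic from Cauchy--Schwarz for the $\mathcal{B}$-valued inner product. This closes the cycle and completes the proof.
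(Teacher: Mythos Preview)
Your proof is correct and follows the standard route. The paper itself does not give a proof of this lemma; it only cites \cite[Lemma 2.10]{JekelLiu2020} and \cite[Proposition 2.10]{Liu2021}, so there is nothing to compare against directly. One small stylistic remark: in the complete-positivity step of $(3)\Rightarrow(1)$, it is cleaner to say that for each $k$ the Gram matrix $(\langle a_{ki}\xi,\, a_{kj}\xi\rangle)_{i,j}$ is positive in $M_n(\mathcal{B})$ (a standard fact about right Hilbert $\mathcal{B}$-modules) rather than to speak of an ``$M_n(\mathcal{B})$-valued inner product on $\mathcal{H}^n$,'' which as phrased is not literally a Hilbert-module inner product; the content of your argument is of course the same.
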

	
	\begin{remark} \label{rem: conditional expectation automatic}
		This lemma is closely related to the well-known fact that if $\cB \subseteq \cA$ and $E: \cA \to \cB$ is completely positive with $E|_{\cB} = \id$, then $E$ is automatically a conditional expectation.
		Indeed, letting $\xi = 1 \otimes 1$ in $\cA \otimes_E \cB$, we see that Lemma \ref{lem:unitvector} (2) is equivalent to $E|_{\cB} = \id$, and so $E$ is a conditional expectation.
	\end{remark}
	
	\begin{definition}
		In the setting of Lemma~\ref{lem:unitvector}, if $\xi$ satisfies (1) - (3), then we call $\xi$ a \emph{$\cB$-central unit vector} and we call $(\cH,\xi)$ a \emph{pointed $\cA$-$\cB$-correspondence}.
	\end{definition}
	
	Hence, for $\cB \subseteq \cA$, every pointed $\cA$-$\cB$-correspondence makes $\cA$ into a $\cB$-valued probability space.  Conversely, given a $\cB$-valued probability space $(\cA,E)$, one can construct a canonical $\cA$-$\cB$-correspondence $L^2(\cA,E)$ by the operator-valued GNS construction as follows.  Note that $\mathcal{A}$ is a right $\mathcal{B}$-module and we can define a $\mathcal{B}$-valued semi-inner product on $\mathcal{A}$ by $\ip{a_1,a_2} = E[a_1^*a_2]$.  We denote the separation-completion with respect to this inner product by $L^2(\mathcal{A},E)$.  One can check that $L^2(\mathcal{A},E)$ is a $\mathcal{A}$-$\mathcal{B}$-correspondence.  If we denote by $\xi$ the equivalence class of the vector $1 \in \mathcal{A}$, then we have
	\[
	E[a] = \ip{\xi, a \xi}.
	\]
	Thus, every $\cB$-valued non-commutative probability space can be realized using some pointed $\cA$-$\cB$-correspondence.
	
	In some contexts of $\cB$-valued probability spaces, one may require an additional condition that the $*$-homomorphism $\pi: \cA \to \mathcal{L}(\cH_{\cB})$ is injective, i.e., the representation of $\cA$ is faithful.
	This is equivalent to saying that $E[a_1 a a_2] = 0$ for all $a_1$, $a_2 \in \cA$ only if $a = 0$ (since elements from $\cA$ are dense in $L^2(\cA,E)$).
	Generally, if $(\cH,\xi)$ is a pointed $\cA$-$\cB$ correspondence, then the representation of $\cA$ restricts to a faithful representation of $\cB$ since $\ip{\xi, b \xi} = b$ for $b \in \cB$.
	Hence, $\pi(\cA) \supseteq \cB$ can be viewed as a $\cB$-valued probability space and $\cH$ as a $\pi(\cA)$-$\cB$-correspondence.
	In \S \ref{sec: convolution sums}, we will simply assume a pointed $\mathcal{B}$-$\mathcal{B}$ correspondence $(\mathcal{H},\xi)$ is given and that $\cA$ is a $\mathrm{C}^*$-subalgebra of $\mathcal{L}(\cH_{\cB})$ containing $\cB$.
	
	\subsection{\texorpdfstring{$\cB$}{\unicodecurlyB}-valued laws and analytic transforms}
	
	In order to discuss free convolution powers, we must first describe the analog of probability distributions in the $\cB$-valued setting.  These are an analog of \emph{compactly supported} measures on $\mathbb{R}$ (the $\cB$-valued analog of measures with unbounded support is beset with technical issues).  A measure with compact support on $\mathbb{R}$ is uniquely determined by its moments (i.e.\ the expectation of polynomials), and thus can be viewed as a positive linear functional $\mu$ on the polynomial algebra $\C[x]$ satisfying $|\mu[x^k]| \leq R^k$ for some $R$.  The description in the $\cB$-valued setting is analogous, with $\C[x]$ replaced by $\cB$-valued polynomials.
	
	\begin{definition}
		Let $\mathcal{B}$ be a unital $\mathrm{C}^*$-algebra.  We define the \emph{non-commutative polynomial algebra} $\mathcal{B}\ip{x}$ to be the universal unital $*$-algebra generated by $\mathcal{B}$ and a self-adjoint indeterminate $x$.  As a vector space, $\mathcal{B}\ip{x}$ is spanned by the non-commutative monomials $b_0 x b_1 \dots x b_k$ for $k \geq 0$ and $b_j \in \mathcal{B}$.  Note that $\mathcal{B} \subseteq \mathcal{B}\ip{x}$ as unital $*$-algebras and in particular $\mathcal{B}\ip{x}$ is a $\mathcal{B}$-$\mathcal{B}$-bimodule.
	\end{definition}
	
	\begin{definition}[{$\cB$-valued laws, as in \cite{Voiculescu1995,PV2013,AW2016}}]
		Let $(\cA,E)$ be a $\cB$-valued probability space and let $X \in \cA$ be self-adjoint.  The \emph{$\cB$-valued law of $X$} is the map $\mu_X: \cB\ip{x} \to \cB$ given by $f \mapsto E[f(X)]$.  In particular, if $\cA \subseteq \mathcal{L}(\cH_{\cB})$ for some pointed $\cB$-$\cB$-correspondence $(\cH,\xi)$, then $\mu[f] = \ip{\xi, f(X)\xi}$.
	\end{definition}
	
	Just as in the case of probability measures on $\R$, there is an abstract characterization of the linear functionals $\mu: \cB\ip{x} \to \cB$ that arise as the $\cB$-valued law of some self-adjoint element with $\norm{X} \leq R$, namely, $\mu$ is a completely positive $\cB$-bimodule map with $\mu|_{\cB} = \id_\cB$ and $\norm{\mu(b_0xb_1 \dots xb_k)} \leq R^k \norm{b_0} \dots \norm{b_k}$ for all $k \geq 0$ and $b_0$, \dots, $b_k \in \cB$.  For details, see \cite[Proposition 1.2]{PV2013}, \cite[Proposition 2.9]{Williams2017}, and \cite[Theorem 2.17]{JekelLiu2020}.
	
	We also have $\cB$-valued versions of the Cauchy--Stieltjes transform and other analytic functions associated to a probability measure. For a probability measure $\mu$ on $\R$ and $\im(z) > 0$, the Cauchy--Stieltjes transform is given by
	\[
	G_\mu(z) = \int_{\R} \frac{1}{z - x}\,d\mu(x).
	\]
	The definition in the $\cB$-valued setting is similar.  Let $\mu$ be the $\cB$-valued law of some self-adjoint $X$ in a $\cB$-valued probability space $(\cA,E)$.  Then we want to define for certain $z \in \cB$,
	\[
	G_\mu(z) = E[(z - X)^{-1}].
	\]
	For this to make sense, $z - X$ must be invertible in $\cA$.  But in fact, if we denote $\im(z) = (z - z^*) / 2i$, then $\im(z) \geq \epsilon 1$ implies that $z$ is invertible with $\norm{z^{-1}} \leq 1/ \epsilon$.  And so since $X$ is self-adjoint, i.e.\ $\im(X) = 0$, we also obtain $\norm{(z - X)^{-1}} \leq 1/ \epsilon$.
	
	There is one more wrinkle in the definition of $G_\mu$ in the $\cB$-valued setting.  Namely, knowing the values of $G_\mu(z)$ is not sufficient to determine the $\cB$-valued law $\mu$ uniquely.  Indeed, looking at the power series of $G_\mu(z)$ ``at infinity'' we would only obtain moments of the form $\mu[z^{-1}xz^{-1} \dots xz^{-1}]$, but for the $\cB$-valued law, we need all moments of the form $\mu[b_0xb_1 \dots xb_n]$.  This problem can be solved by studying matrix amplifications.
	
	\begin{definition}
		Let $\mu$ be the $\cB$-valued law of some self-adjoint $X$ in a $\cB$-valued probability space $(\cA,E)$.  Let
		\[
		\mathbb{H}^{(n)}(\cB) = \{z \in M_n(\cB): \im(z) \geq \epsilon 1 \text{ for some } \epsilon > 0\}.
		\]
		Let $G_\mu^{(n)}: \mathbb{H}^{(n)}(\cB) \to M_n(\cB)$ be given by
		\[
		G_\mu^{(n)}(z) = E^{(n)}[(z - X^{(n)})^{-1}],
		\]
		where $X^{(n)}$ denotes image of $X$ in $M_n(\cA)$ under the canonical mapping $\cA \to M_n(\cA)$, that is, $X^{(n)} = X \otimes I_n$.
	\end{definition}
	
	Note that the function
	\[
	\tilde{G}_\mu^{(n)}(z) := G_\mu^{(n)}(z^{-1}) = E^{(n)}[(z^{-1} - X^{(n)})^{-1}] = E^{(n)}[z(1 - X^{(n)}z)^{-1}]
	\]
	extends to be defined in the neighborhood $B_{1/R}(0)$ in $M_n(\cB)$ where $R = \norm{X}$.  The moments of $\mu$ can then be recovered from $\tilde{G}_\mu^{(n)}$ as follows:  letting
	\[
	z = \begin{bmatrix} 0 & b_0 & 0 & \dots & 0 & 0 \\
		0 & 0 & b_1 & \dots & 0 & 0 \\
		\vdots & \vdots & \vdots & \ddots & \vdots & \vdots \\
		0 & 0 & 0 & \dots & 0 & b_n \\
		0 & 0 & 0 & \dots & 0 & 0
	\end{bmatrix} \in M_{n+2}(\cB),
	\]
	we have
	\[
	[\tilde{G}_\mu^{(n+2)}(z)]_{1,n+2} = \mu[b_0xb_1 \dots xb_n].
	\]
	
	The study of the $\cB$-valued Cauchy transform motivates the definition of $\cB$-valued non-commutative functions or $\cB$-valued fully matricial functions, which are a $\cB$-valued analog of complex-analytic functions.
	For background on these fully matricial (a.k.a. non-commutative) functions, see \cite{Voiculescu2004}, \cite{KVV2014}, \cite[\S 3]{JekelThesis}.
	
	The appropriate domain for these functions are so-called fully matricial domains:
	
	\begin{definition}
		Let $\cB$ be a $\mathrm{C}^*$-algebra.  A \emph{fully matricial domain} is a sequence of sets $\Omega^{(n)} \subseteq M_n(\cB)$ such that the following conditions hold:
		\begin{enumerate}[(1)]
			\item If $z \in \Omega^{(n)}$ and $w \in \Omega^{(m)}$, then $z \oplus w \in \Omega^{(n+m)}$.
			\item Let $z \in \Omega^{(n)}$.  For each $m$, let $z^{(m)}$ be the direct sum of $m$ copies of $z$.  Then there is some $\delta > 0$ such that $B_\delta(z^{(m)}) \subseteq \Omega^{(nm)}$ for all $m \geq 1$.
		\end{enumerate}
	\end{definition}
	
	For example, it is immediate to check that $\mathbb{H}^{(n)}(\cB)$ is a fully matricial domain, and so is the sequence $\Omega^{(n)}$ given by the ball $B_\delta(0)$ in $M_n(\cB)$.
	
	\begin{definition} \label{def: matricial function}
		Let $\Omega = (\Omega^{(n)})_{n \in \N}$ be a fully matricial domain over $\cB$.  A \emph{fully matricial function} on $\Omega$ is a sequence of functions $F^{(n)}: \Omega^{(n)} \to M_n(\cB)$ satisfying the following conditions:
		\begin{enumerate}[(1)]
			\item If $z \in \Omega^{(n)}$ and $w \in \Omega^{(m)}$, then $F^{(m+n)}(z \oplus w) = F^{(n)}(z) \oplus F^{(m)}(w)$.
			\item If $z \in \Omega^{(n)}$ and $S \in M_n(\C)$ is invertible and $S z S^{-1} \in \Omega^{(n)}$, then we have $F^{(n)}(SzS^{-1}) = S F^{(n)}(z) S^{-1}$.
			\item Given $z \in \Omega^{(n)}$, there exists some $\delta > 0$ and $M > 0$ such that for all $m \in \N$, for all $w \in B_\delta(z^{(m)})$, we have $\norm{F^{(nm)}(w)} \leq M$.
		\end{enumerate}
	\end{definition}
	
	One can check easily that the Cauchy transform $G_\mu(z)$ is a fully matricial function. Interestingly, the conditions (1) and (2) in Definition \ref{def: matricial function} are purely algebraic and (3) is merely a local boundedness condition, but these conditions automatically imply the existence of local power series expansions.  The inverse function theorem for fully matricial functions can be used to show that for a $\cB$-valued law $\mu$, the transform $\tilde{G}_\mu(z)$ is invertible in a neighborhood of $0$, and thus to define the $\cB$-valued analog of the $R$-transform, which plays a central role in the study of free convolution.  Here we single out the fact that we need for later use.
	
	\begin{proposition}[{\cite[Proposition 4.1]{Dykema2006}, \cite[Proof of Corollary 5.4]{PV2013}, \cite[Lemma 4.5.9]{JekelThesis}}] \label{prop: inversion}
		Let $\mu$ be the $\cB$-valued law of some self-adjoint operator $X$ in a $\cB$-valued probability space $(\cA,E)$ with $\norm{X} \leq r$.
		Then $\tilde{G}_\mu^{(n)}$ has a unique inverse function defined on $B_{c/r}(0)$ in $M_n(\cB)$ where $c$ is a universal constant (we can take $c = 3 - 2 \sqrt{2}$).
		The inverse function is fully matricial.
		Moreover, given $\delta > 0$, $\mu$ is uniquely determined by the values of $(\tilde{G}_\mu^{(n)})^{-1}(z)$ for $z \in B_\delta(0)$ of $M_n(\cB)$ and $n \in \N$.
	\end{proposition}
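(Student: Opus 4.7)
My plan is to deduce the statement from a Banach fixed-point argument applied level-by-level, using uniqueness of the fixed points to propagate both the matricial structure of the inverse and the recovery of $\mu$.

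Writing $X^{(n)} = X \otimes I_n$ (which has norm at most $r$), for $\norm{z} < 1/r$ in $M_n(\cB)$ I would expand the Neumann series
\[
\tilde{G}_\mu^{(n)}(z) = \sum_{k=0}^\infty E^{(n)}\!\bigl[z(X^{(n)}z)^k\bigr] = z + H_n(z),
\]
so that $\tilde{G}_\mu^{(n)}(0) = 0$, $D\tilde{G}_\mu^{(n)}(0) = \id$, and termwise estimates give $\norm{H_n(z)} \leq r\norm{z}^2/(1 - r\norm{z})$ and $\norm{DH_n(z)} \leq r\norm{z}(2 - r\norm{z})/(1 - r\norm{z})^2$, with the latter strictly less than $1$ once $r\norm{z} < 1 - 1/\sqrt{2}$. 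Solving $\tilde{G}_\mu^{(n)}(z) = w$ for fixed $w$ amounts to finding a fixed point of $\Phi_w(z) := w - H_n(z)$, which maps $\overline{B}_\rho$ into itself provided $\norm{w} \leq \rho(1 - 2r\rho)/(1 - r\rho)$. Optimizing the right-hand side over $\rho \in (0,1/(2r))$ yields the maximum value $(3 - 2\sqrt{2})/r$ at $r\rho = 1 - 1/\sqrt{2}$; for any $w$ in the open ball $B_{c/r}(0)$ with $c = 3 - 2\sqrt{2}$, one can choose $\rho$ slightly smaller than this optimum so that both the self-map and strict-contraction hypotheses hold. The Banach fixed-point theorem then delivers a unique $z = (\tilde{G}_\mu^{(n)})^{-1}(w)$, holomorphic in $w$ by standard parameter-dependence of contractions.

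The fully matricial structure of the inverse transfers from that of $\tilde{G}_\mu^{(n)}$ (which is immediate from the series) by uniqueness: for $w \in B_{c/r}(0) \subseteq M_n(\cB)$ and $w' \in B_{c/r}(0) \subseteq M_m(\cB)$, the element $(\tilde{G}_\mu^{(n)})^{-1}(w) \oplus (\tilde{G}_\mu^{(m)})^{-1}(w')$ lies in the appropriate fixed-point ball at level $n+m$ and is sent by $\tilde{G}_\mu^{(n+m)}$ to $w \oplus w'$, forcing it to equal $(\tilde{G}_\mu^{(n+m)})^{-1}(w \oplus w')$; covariance under similarity is analogous, and local boundedness is built into the estimate $\norm{(\tilde{G}_\mu^{(n)})^{-1}(w)} \leq \rho$. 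For the final recovery statement, knowing $(\tilde{G}_\mu^{(n)})^{-1}$ on $B_\delta(0)$ pins down $\tilde{G}_\mu^{(n)}$ on the open image set, hence all coefficients of its Neumann expansion at $0$, hence via the block-matrix formula for $[\tilde{G}_\mu^{(n+2)}]_{1,n+2}$ recalled before the proposition each moment $\mu[b_0 x b_1 \cdots x b_n]$; since these moments span the action of $\mu$ on $\cB\ip{x}$, they determine $\mu$. The only genuinely quantitative point is the optimization delivering $c = 3 - 2\sqrt{2}$; the remainder is routine once the Neumann series is in hand.
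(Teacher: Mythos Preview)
The paper does not supply its own proof of this proposition; it is stated with citations to \cite[Proposition 4.1]{Dykema2006}, \cite[Proof of Corollary 5.4]{PV2013}, and \cite[Lemma 4.5.9]{JekelThesis}, and the narrative moves directly to the definition of the $R$-transform. So there is no ``paper's proof'' to compare against here.

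That said, your argument is correct and is essentially the standard one found in those references. The Neumann expansion, the estimate $\norm{DH_n(z)} \leq t(2-t)/(1-t)^2$ with $t = r\norm{z}$, and the optimization of $\rho(1-2r\rho)/(1-r\rho)$ all check out; the critical value $r\rho = 1 - 1/\sqrt{2}$ simultaneously gives the maximal self-map radius and the boundary of the contraction region, yielding exactly $c = 3 - 2\sqrt{2}$. Your transfer of the fully matricial axioms via uniqueness of fixed points is the right mechanism, and the recovery of $\mu$ from the germ of the inverse via the block-nilpotent trick is precisely how the paper extracts moments from $\tilde{G}_\mu^{(n)}$ just before the proposition. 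One small point worth making explicit: for the ``uniquely determined'' clause you are implicitly using analyticity (the identity principle) to pass from knowing $\tilde{G}_\mu^{(n)}$ on the open image of $B_\delta(0)$ to knowing all its Taylor coefficients at $0$; this is routine but should be named.
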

	
	\begin{definition}[{\cite[\S 4.8]{Voiculescu1995}}]
		Let $\mu$ be a $\cB$-valued law.
		Then the \emph{$R$-transform} of $\mu$ is the function $R_\mu^{(n)}(z) = (G_\mu^{(n)})^{-1}(z) - z^{-1}$ defined in a ball around zero in $M_n(\cB)$.
	\end{definition}
	
	\subsection{\texorpdfstring{$\cB$}{\unicodecurlyB}-valued free independence and free products}
	
	\begin{definition}[Free independence]
		Let $(\cA,E)$ be a $\cB$-valued probability space.  Then unital subalgebras $\cA_1$, \dots, $\cA_N$ containing $\cB$ are said to be \emph{freely independent (over $\cB$)} if we have
		\[
		E[a_1 \dots a_k] = 0
		\]
		whenever $a_j \in \cA_{i(j)}$ with $E[a_j] = 0$, provided that all consecutive indices $i(j)$ and $i(j+1)$ are distinct.  Moreover, if $X_1$, \dots, $X_N$ are self-adjoint elements of $\cA$, we say that $X_1$, \dots, $X_N$ are freely independent (over $\cB$) if the subalgebras $\cA_j$ generated by $\cB$ and $X_j$ are freely independent (over $\cB$).
	\end{definition}
	
	The following fundamental result in free probability relates the $R$-transform and free independence.
	
	\begin{proposition}[{\cite[\S 4.11]{Voiculescu1995}, \cite[Lemma 4.5]{Dykema2006}}]
		Suppose that $X$ and $Y$ are freely independent.  For ease of notation, let $R_X$ denote the $R$-transform of the law of $X$.  Then there is some $r > 0$ such that $R_{X+Y}^{(n)}(z) = R_{X}^{(n)}(z) + R_{Y}^{(n)}(z)$ for $z$ in $B_r(0)$.
		In particular, the law $\mu_{X+Y}$ is uniquely determined by $\mu_X$ and $\mu_Y$.
	\end{proposition}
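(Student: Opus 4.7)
The plan is to reduce the statement to the level $n=1$ case and then invoke the inversion theorem stated in Proposition~\ref{prop: inversion}.

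The first step is a reduction via matrix amplification. If $X$ and $Y$ are free over $\cB$ in $(\cA, E)$, then the amplifications $X^{(n)}, Y^{(n)} \in M_n(\cA)$ are free over $M_n(\cB)$ in the $M_n(\cB)$-valued probability space $(M_n(\cA), E^{(n)})$. This is a routine check from the definition: any alternating $M_n(\cB)$-centered product of elements from the algebras generated by $X^{(n)}$ and $Y^{(n)}$ expands entrywise into finite sums of alternating $\cB$-centered products from the algebras generated by $X$ and $Y$, all of which have vanishing $E$-image. Since $R_{\mu_X}^{(n)}$ is, by definition, the level-$1$ $R$-transform of the $M_n(\cB)$-valued law of $X^{(n)}$, it is enough to prove $R_{X+Y}^{(1)} = R_X^{(1)} + R_Y^{(1)}$ on a neighborhood of $0$, with $\cB$ allowed to be arbitrary.

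For the level-$1$ identity I would follow the analytic strategy of Haagerup \cite[\S 3]{Haagerup1997} and Lehner \cite[Theorem 3.1]{Lehner2001}, in its operator-valued adaptation by Dykema \cite[Lemma 4.5]{Dykema2006}. Concretely, for $z \in \cB$ with $\im z \geq \epsilon 1$, set $w = G_{X+Y}(z)$; the crux is to establish a subordination identity
\[
E[(z - X - Y)^{-1}] = E[(\omega(z) - X)^{-1}], \qquad \omega(z) = z - R_Y(w),
\]
and the symmetric identity with $X$ and $Y$ interchanged. The derivation proceeds by expanding the resolvent $(z-X-Y)^{-1}$ as a Neumann-type series in $X$ and $Y$, applying $E$ termwise, and organizing the sum according to the positions of $X$-letters and $Y$-letters. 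The freeness of $X$ and $Y$ over $\cB$ is used to factor each alternating centered word, which repackages all ``$Y$-data'' into the single $\cB$-valued quantity $R_Y(w)$. Comparing the resulting formula with $G_{X+Y}^{-1}(w) = R_{X+Y}(w) + w^{-1}$ (and likewise with $X$ and $Y$ swapped) yields $R_{X+Y}(w) = R_X(w) + R_Y(w)$ on a common neighborhood of $0$.

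Once additivity is established at every matrix level, Proposition~\ref{prop: inversion} shows that $R_{X+Y}^{(n)} = R_X^{(n)} + R_Y^{(n)}$ determines $\tilde G_{\mu_{X+Y}}^{(n)}$ uniquely on a ball around $0$ in $M_n(\cB)$ for every $n$, hence determines the full $\cB$-valued law $\mu_{X+Y}$ via the moment-extraction recipe recalled in \S\ref{sec:prelims}. The main obstacle is the second step: in the operator-valued setting one cannot commute elements of $\cB$ past $X$ or $Y$, so the resolvent series must be organized carefully in terms of alternating centered $\cB$-coefficient words, and the factorization afforded by freeness must be tracked at the level of multilinear $\cB$-forms rather than scalars; Dykema's argument handles this by an induction on the length of such words.
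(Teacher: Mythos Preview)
The paper does not give its own proof of this proposition; it is stated as a background result with citations to \cite[\S 4.11]{Voiculescu1995} and \cite[Lemma 4.5]{Dykema2006}, and then used without further argument. Your sketch follows precisely the approach of those cited references---the reduction to level $n=1$ by matrix amplification, followed by Dykema's operator-valued version of the Haagerup--Lehner resolvent computation---so there is nothing in the paper to compare against beyond noting that your outline is correct and matches the cited sources. The amplification step is valid as you describe (since $E^{(n)}(a)=0$ forces every entry of $a$ to be $E$-centered, alternating $E^{(n)}$-centered products in $M_n(\cA)$ expand entrywise into sums of alternating $E$-centered products), and the level-$1$ subordination identity you write down is exactly what Dykema's argument produces. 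Incidentally, the paper's own Theorem~\ref{thm: realization} is proved by the same Haagerup--Lehner--Dykema technique you invoke, so your sketch is fully in the spirit of the methods the paper develops.
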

	
	In this case, the law of $X+Y$ is said to be the \emph{$\cB$-valued free convolution} of $\mu_X$ and $\mu_Y$, or $\mu_{X+Y} = \mu_X \boxplus \mu_Y$.
	Since the law of $X + Y$ is uniquely determined, this construction can be used to define the free convolution $\mu \boxplus \nu$ for any two given $\cB$-valued laws $\mu$ and $\nu$, provided we can show that there exists some $\cB$-valued non-commutative probability space $(\cA,E)$ and some $X$, $Y \in \cA$ that are freely independent and realize the laws $\mu$ and $\nu$ respectively.
	This follows from the construction of $\cB$-valued free products that we will now describe.
	
	Suppose we are given $\cB$-valued probability spaces $(\cA_j,E_j)$ for $j = 1$, \dots, $N$.  Assume, as at the end of \S \ref{sec: B prob space}, that $\cA_j \subseteq \mathcal{L}((\cH_j)_{\cB})$ for some pointed $\cA_j$-$\cB$-correspondence $(\cH_j,\xi_j)$.  Then we want to construct some pointed $\cB$-$\cB$-correspondence $(\cH,\xi)$ and $*$-homomorphisms $\rho_j: \mathcal{L}((\cH_j)_{\cB}) \to \mathcal{L}(\cH_{\cB})$ such that $\rho_j(\mathcal{L}((\cH_j)_{\cB})$ are freely independent over $\cB$.  Two ingredients are needed for this construction: first, decomposing $\cH_j$ into the span of $\xi_j$ and its orthogonal complement, and second, tensor products of correspondences.
	
	Let $\cB \subseteq \cA$ and $\mathcal{H}$ be an $\mathcal{A}$-$\mathcal{B}$-correspondence; in general, if $\mathcal{K} \subseteq \mathcal{H}$ is an $\mathcal{A}$-$\mathcal{B}$-submodule, then the orthogonal complement $\mathcal{K}^\perp = \{h: \ip{h,k} = 0 \text{ for all } k \in \mathcal{K}\}$ is also an $\mathcal{A}$-$\mathcal{B}$-correspondence, but $\mathcal{K} + \mathcal{K}^\perp$ might not span all of $\mathcal{H}$ (this is related to the fact that adjointability is not automatic for bounded right $\cB$-module maps).
	However, we do have such a decomposition in the special case where $\mathcal{K}$ is the span of a $\mathcal{B}$-central unit vector in a $\mathcal{B}$-$\mathcal{B}$-correspondence.
	The following lemma is proved as in \cite[Proof of Remark 3.3]{PV2013} or \cite[Lemma 2.5.7]{JekelThesis}.
	
	\begin{lemma} \label{lem:orthocomplement}
		Let $\mathcal{H}$ be a $\mathcal{B}$-$\mathcal{B}$-correspondence and $\xi$ a $\mathcal{B}$-central unit vector.  Let $\mathcal{H}^\circ = \{h \in \mathcal{H}: \ip{h,\xi} = 0\}$.  Then we have $\mathcal{H} = \mathcal{B} \xi \oplus \mathcal{H}^\circ$ as $\mathcal{B}$-$\mathcal{B}$-correspondences.
	\end{lemma}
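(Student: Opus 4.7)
The plan is to exhibit an explicit orthogonal projection $P \in \mathcal{L}(\mathcal{H})$ with image $\mathcal{B}\xi$ and kernel $\mathcal{H}^\circ$; this bypasses the usual failure of complementation in Hilbert $\mathcal{B}$-modules by directly writing down the projection and checking it is adjointable. Specifically, I would define
\[
P: \mathcal{H} \to \mathcal{H}, \qquad Ph := \xi \langle \xi, h \rangle.
\]
Since $\langle \xi, h \rangle \in \mathcal{B}$ and $\mathcal{H}$ is a right $\mathcal{B}$-module, $P$ is well-defined. Boundedness follows from the Cauchy--Schwarz-type inequality $\langle Ph, Ph \rangle = \langle \xi, h \rangle^* \langle \xi, \xi \rangle \langle \xi, h \rangle = \langle \xi, h \rangle^* \langle \xi, h \rangle$, whose norm is at most $\lVert h \rVert^2$ since $\lVert \xi \rVert = 1$. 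Right $\mathcal{B}$-modularity is immediate from right $\mathcal{B}$-linearity of the inner product in the second slot.

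Next I would verify $P = P^* = P^2$. For self-adjointness: for $h,k \in \mathcal{H}$,
\[
\langle Ph, k \rangle = \langle \xi, h \rangle^* \langle \xi, k \rangle = \langle h, \xi \rangle \langle \xi, k \rangle = \langle h, \xi \langle \xi, k \rangle \rangle = \langle h, Pk \rangle.
\]
For idempotence: $P^2 h = \xi \langle \xi, \xi \langle \xi, h \rangle \rangle = \xi \langle \xi, \xi \rangle \langle \xi, h \rangle = \xi \langle \xi, h \rangle = Ph$, using $\langle \xi, \xi \rangle = 1$ from Lemma~\ref{lem:unitvector}. Thus $P$ is an orthogonal projection in the $\mathrm{C}^*$-algebra $\mathcal{L}(\mathcal{H})$. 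Its image is $\mathcal{B}\xi = \xi \mathcal{B}$ (the two agree by $\mathcal{B}$-centrality of $\xi$): clearly $Ph \in \xi \mathcal{B}$, and conversely for any $b \in \mathcal{B}$, $P(\xi b) = \xi \langle \xi, \xi b \rangle = \xi b$ since $\langle \xi, \xi b \rangle = \langle \xi, \xi \rangle b = b$. Its kernel is precisely $\mathcal{H}^\circ$, since $Ph = 0$ is equivalent to $\langle \xi, h \rangle = 0$ (apply $\langle \xi, \cdot \rangle$ to get one direction, and observe $Ph = \xi \cdot 0 = 0$ for the other).

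It remains to check that both summands are closed under the left $\mathcal{B}$-action, so that the decomposition holds as $\mathcal{B}$-$\mathcal{B}$-correspondences and not just as right Hilbert $\mathcal{B}$-modules. For $\mathcal{B}\xi$: given $b, b' \in \mathcal{B}$, the $\mathcal{B}$-centrality of $\xi$ gives $b(b'\xi) = b(\xi b') = (b \xi) b' = \xi(b b')= (bb')\xi \in \mathcal{B}\xi$. For $\mathcal{H}^\circ$: if $h \in \mathcal{H}^\circ$ and $b \in \mathcal{B}$, then using that the left action of $\mathcal{B}$ is by adjointable operators with adjoint $b \mapsto b^*$, together with $\mathcal{B}$-centrality of $\xi$,
\[
\langle \xi, bh \rangle = \langle b^* \xi, h \rangle = \langle \xi b^*, h \rangle = b \langle \xi, h \rangle = 0,
\]
so $bh \in \mathcal{H}^\circ$.

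The only subtle point is the initial definition of $P$ and the verification that it is adjointable; this is where the hypothesis that $\xi$ is a $\mathcal{B}$-central \emph{unit} vector does the work, since it supplies the identity $\langle \xi, \xi \rangle = 1$ needed for idempotence and bounds $P$ uniformly. Once $P \in \mathcal{L}(\mathcal{H})$ is in hand, the rest is a routine unpacking of definitions.
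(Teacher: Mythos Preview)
Your proof is correct and is exactly the standard argument: the paper does not spell out a proof but cites \cite[Proof of Remark 3.3]{PV2013} and \cite[Lemma 2.5.7]{JekelThesis}, both of which proceed precisely by exhibiting the rank-one projection $Ph = \xi\langle \xi, h\rangle$, checking $P = P^* = P^2 \in \mathcal{L}(\mathcal{H})$, and identifying its range and kernel as you do. Your additional verification that both summands are left $\mathcal{B}$-invariant (using $\mathcal{B}$-centrality of $\xi$) is also correct and is what makes this a decomposition of $\mathcal{B}$-$\mathcal{B}$-correspondences rather than merely of right Hilbert $\mathcal{B}$-modules.
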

	
	We will also need the following notion of \emph{tensor products of $\mathrm{C}^*$-correspondences}.
	
	\begin{construction} \label{const:bimoduletensorproduct}
		Let $\mathcal{B}_0$, \dots, $\mathcal{B}_n$ be unital $\mathrm{C}^*$-algebras, and suppose that $\mathcal{H}_j$ is a Hilbert $\mathcal{B}_{j-1}$-$\mathcal{B}_j$-correspondence for each $j$.  We can form the algebraic tensor product
		\[
		\mathcal{H}_1 \otimes_{\alg, \mathcal{B}_1} \dots \otimes_{\alg, \mathcal{B}_{n-1}} \mathcal{H}_n
		\]
		in the sense of algebraic bimodules.  We define a $\cB_n$-valued semi-inner product by
		\[
		\ip{h_1 \otimes \dots \otimes h_n, h_1' \otimes \dots \otimes h_n'} = \ip{h_n, \ip{h_{n-1}, \ip{\dots, \ip{h_1, h_1'} \dots} h_{n-1}'} h_n'}.
		\]
		In other words, we first evaluate $\ip{h_1,h_1'} \in \mathcal{B}_1$, then evaluate $\ip{h_1,h_1'} h_2'$ using the left $\mathcal{B}_1$-module structure on $\mathcal{H}_2$, then compute $\ip{h_2, \ip{h_1,h_1'} h_2'} \in \mathcal{B}_2$ and so forth.  Positivity of the inner product is checked by using complete positivity in the standard way (see e.g.\ \cite[Proposition 4.5]{Lance1995}, \cite[\S 2.3.2]{JekelThesis}).
		
		We denote the separation-completion with respect to this inner product by
		\[
		\mathcal{H}_1 \otimes_{\mathcal{B}_1} \dots \otimes_{\mathcal{B}_{n-1}} \mathcal{H}_n
		\]
		and one can show that this is a Hilbert $\mathcal{B}_0$-$\mathcal{B}_n$-bimodule in the obvious way.  As one would expect, these tensor products satisfy the associativity up to a canonical isomorphism, and they distribute over direct sums of correspondences in each argument.
	\end{construction}
	
	We now come to the construction of free products of pointed $\cB$-$\cB$-correspondences as defined in \cite[\S 5]{Voiculescu1985}.  Variants of the construction were also explained \cite[\S 2-3]{Voiculescu1995} and \cite[\S 3.3, \S 3.5]{Speicher1998}.  We follow similar terminology as in \cite[\S 5]{JekelThesis}.  Suppose that $(\mathcal{H}_1,\xi_1)$, \dots, $(\mathcal{H}_N, \xi_N)$ are pointed $\cB$-$\cB$-correspondences.  Define
	\[
	\cH = *_{j=1}^N (\mathcal{H}_j, \xi_j) := \cB \xi \oplus \bigoplus_{k \geq 1} \bigoplus_{\substack{j_1, \dots, j_k \in [N] \\ j_r \neq j_{r+1}}} \cH_{j_1}^\circ \otimes_{\cB} \dots \otimes_{\cB} \cH_{j_k}^\circ.
	\]
	Here $\cB \xi$ represents a copy of $\cB$ as a $\cB$-$\cB$-correspondence with $\xi$ corresponding to the vector $1$.
	
	Next, we define $*$-homomorphisms $\rho_j: B(\cH_j) \to B(\cH)$ as follows.  For each $j$, there is a natural decomposition of $\cH$ as
	\[
	\cH \cong \cH_j \otimes_{\cB} \cM_j,
	\]
	where
	\[
	\cM_j = \cB \oplus \bigoplus_{k \geq 1} \bigoplus_{\substack{j_1, \dots, j_k \in [N] \\ j_r \neq j_{r+1} \\ j_1 \neq j}} \cH_{j_1}^\circ \otimes_{\cB} \dots \otimes_{\cB} \cH_{j_k}^\circ.
	\]
	In each case, if we denote by $U_j$ the (unitary) isomorphism $\cH \to \cH_j \otimes_{\cB} \cM_j$, we define the $*$-homomorphism $\rho_j$ by
	\[
	\rho_j(a) = U_j^*(a \otimes \id_{\cM_j}) U_j.
	\]
	Note that $\rho_j$ is injective because $\rho_j(a)$ restricted to the direct summands $\cB \oplus \cH_j^\circ$ in $\cH$ is $a$ itself conjugated by the obvious isomorphism $\cB \oplus \cH_j^\circ \to \cH_j$.  Moreover, by the same token
	\[
	\ip{\xi, \rho_j(a) \xi} = \ip{\xi_j, a \xi_j},
	\]
	which means that $\rho_j$ is expectation-preserving.  It is also easy to check that $\rho_j$ is a $\cB$-$\cB$-bimodule map, where $B(\cH_j)$ and $B(\cH)$ are given a $\cB$-$\cB$-bimodule structure through the embeddings $\cB \to B(\cH_j)$ and $\cB \to B(\cH)$ given by the left $\cB$-module structure of $\cH_j$ and $\cH$.
	
	\begin{theorem}[{See e.g.\ \cite[\S 5.3.2]{JekelThesis}}] \label{thm:productspaces}
		Let $(\cH_j,\xi_j)$ for $j = 1, \dots, N$ be $\cB$-$\cB$ correspondences with $\cB$-central unit vectors.  Let $(\cH, \xi) = *_{j=1}^N (\cH_j,\xi_j)$.  Let $E: B(\cH) \to \cB$ be the expectation given by the vector $\xi$.  Then the algebras $\rho_1(B(\cH_1))$, \dots, $\rho_N(B(\cH_N))$ are freely independent in the $\cB$-valued probability space $(B(\cH), E)$.
	\end{theorem}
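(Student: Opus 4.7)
The plan is to verify the moment condition for free independence directly, by tracking where a product of centered operators sends the distinguished vector $\xi$. Fix alternating indices $i(1), \dots, i(k)$ with $i(r) \neq i(r+1)$ and operators $a_r = \rho_{i(r)}(b_r) \in \rho_{i(r)}(B(\cH_{i(r)}))$ satisfying $E[a_r] = 0$; the goal is to show $E[a_1 \cdots a_k] = \ip{\xi, a_1 \cdots a_k\,\xi} = 0$.

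The key translation is that $E[\rho_j(b)] = \ip{\xi_j, b \xi_j} = 0$ holds if and only if $b \xi_j \in \cH_j^\circ$, by Lemma~\ref{lem:orthocomplement}. Building on this, the central claim I would prove by downward induction on $r$ is that
\[
a_r a_{r+1} \cdots a_k \, \xi \;\in\; \cH_{i(r)}^\circ \otimes_\cB \cH_{i(r+1)}^\circ \otimes_\cB \cdots \otimes_\cB \cH_{i(k)}^\circ,
\]
understood as a distinguished direct summand of $\cH$. Once this is established for $r=1$, the resulting vector is orthogonal to the summand $\cB\xi$, which immediately yields $E[a_1 \cdots a_k] = 0$ and hence freeness.

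For the base case $r = k$, identifying $\cH_{i(k)}$ with the summand $\cB\xi \oplus \cH_{i(k)}^\circ$ of $\cH$ one obtains $a_k\xi = b_k\xi_{i(k)}$, which lies in $\cH_{i(k)}^\circ$ by centering. For the inductive step, suppose $w := a_{r+1} \cdots a_k \xi$ lies in the summand $\cH_{i(r+1)}^\circ \otimes_\cB \cdots \otimes_\cB \cH_{i(k)}^\circ$. Because $i(r+1) \neq i(r)$, this summand is among those comprising $\cM_{i(r)}$, and under the isomorphism $\cH \cong \cH_{i(r)} \otimes_\cB \cM_{i(r)}$ the vector $w$ is represented by $\xi_{i(r)} \otimes w$. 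Since $\rho_{i(r)}(b_r)$ acts as $b_r \otimes \id_{\cM_{i(r)}}$ in this picture, $a_r w = b_r \xi_{i(r)} \otimes w$; centering forces $b_r \xi_{i(r)} \in \cH_{i(r)}^\circ$, so the result lies in $\cH_{i(r)}^\circ \otimes_\cB \cM_{i(r)}$, and tracing through the identification places it exactly in the summand $\cH_{i(r)}^\circ \otimes_\cB \cH_{i(r+1)}^\circ \otimes_\cB \cdots \otimes_\cB \cH_{i(k)}^\circ$ of $\cH$, completing the induction. The main obstacle is purely bookkeeping: one must carefully reconcile the two natural decompositions of $\cH$ (as an alternating direct sum, and as $\cH_{i(r)} \otimes_\cB \cM_{i(r)}$) and verify that the embeddings of summands and the identification $\xi \leftrightarrow \xi_{i(r)} \otimes \eta_{i(r)}$ (for the canonical unit vector $\eta_{i(r)} \in \cM_{i(r)}$) are compatible, but no further analytic input is needed beyond the definition of the free product.
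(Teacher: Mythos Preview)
Your argument is correct and is the standard proof of freeness in the free product construction. The paper does not actually supply a proof of this theorem; it only cites \cite[\S 5.3.2]{JekelThesis}. That said, the computation you carry out is exactly the one the paper invokes implicitly in the very next result (Proposition~\ref{prop: conditional expectation}(2)), where it asserts without further justification that
\[
\rho_{i(1)}(a_1)\cdots\rho_{i(k)}(a_k)[\cB\xi \oplus \cH_j^\circ] \subseteq \cH_{i(1)}^\circ \otimes_\cB \cdots \otimes_\cB \cH_{i(k)}^\circ \otimes_\cB (\cB\xi \oplus \cH_j^\circ),
\]
so your induction is precisely what underlies both the cited theorem and the paper's own subsequent reasoning.
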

	
	The construction of the free product of correspondences also naturally produces conditional expectations from $\mathcal{L}(\cH_{\cB})$ to $\mathcal{L}((\cH_j)_{\cB})$ as follows.
	
	\begin{proposition} \label{prop: conditional expectation}
		Let $(\cH,\xi)$ be the $\cB$-valued free product of pointed $\cB$-$\cB$-correspondences $(\cH_j,\xi_j)$.  Let $V: \cH_j = \cB \xi_j \oplus \cH_j^{\circ} \to \cH$ be the natural inclusion mapping $\cH_j$ onto the summands $\cB \xi$ and $\cH_j^{\circ}$ in $\cH$.
		\begin{enumerate}[(1)]
			\item Let $\mathcal{E}_j: \mathcal{L}(\cH_{\cB}) \to \mathcal{L}((\cH_j)_{\cB})$ be given by $T \mapsto V^*TV$.  Then $\mathcal{E}_j$ is a conditional expectation (with the identification of $\cA_j$ with $\rho_j(\cA_j)$).
			\item Suppose that $T = \rho_{i(1)}(a_1) \dots \rho_{i(k)}(a_k)$ where $k \geq 1$, $a_j \in \mathcal{L}((\cH_{i(j)})_{\cB})$, $i(1) \neq i(2) \neq \dots \neq i(k)$, and $\ip{\xi_\ell, a_\ell \xi_\ell} = 0$ for each $\ell$.
			Then $\mathcal{E}_j[T] = 0$ unless $k = 1$ and $i(1) = j$.
			\item Given unital $\mathrm{C}^*$-algebras $\cA_j$ with $\cB \subseteq \cA_j \subseteq \mathcal{L}((\mathcal{H}_j)_{\cB})$, $\mathcal{E}_j$ maps $\mathrm{C}^*(\rho_1(\cA_1),\dots,\rho_N(\cA_N))$ into $\cA_j$.
		\end{enumerate}
	\end{proposition}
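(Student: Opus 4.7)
The plan is to exploit the natural isomorphism $\cH \cong \cH_j \otimes_\cB \cM_j$ from the free product construction. Under this isomorphism the vector $\xi$ corresponds to $\xi_j \otimes 1_{\cM_j}$, the inclusion $V: \cH_j \to \cH$ becomes the map $h \mapsto h \otimes 1_{\cM_j}$, and $\rho_j(a)$ becomes $a \otimes \id_{\cM_j}$. This dictionary reduces the three parts to tensor bookkeeping.

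For part (1), I would first verify $\mathcal{E}_j \circ \rho_j = \id$ by computing
\[
V^* \rho_j(a) V(h) = V^*(ah \otimes 1_{\cM_j}) = ah,
\]
using the formula $V^*(k \otimes m) = k \cdot \ip{1_{\cM_j}, m}$ which follows from adjointability of $V$ together with $\ip{1_{\cM_j},1_{\cM_j}} = 1$. The bimodule identity $\mathcal{E}_j(\rho_j(a_1) T \rho_j(a_2)) = a_1 \mathcal{E}_j(T) a_2$ then follows from the factorizations $(a \otimes \id_{\cM_j}) V = V a$ and $V^*(a \otimes \id_{\cM_j}) = a V^*$, which are immediate from the explicit formula for $V^*$. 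Complete positivity is automatic since $\mathcal{E}_j$ has the form $T \mapsto V^* T V$.

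Part (2) is the main technical step. The standard alternating computation for the free product gives
\[
T \xi = a_1 \xi_{i(1)} \otimes_\cB \cdots \otimes_\cB a_k \xi_{i(k)} \in \cH_{i(1)}^\circ \otimes_\cB \cdots \otimes_\cB \cH_{i(k)}^\circ,
\]
obtained by unfolding each $\rho_{i(\ell)}(a_\ell)$ through the decomposition $\cH = \cH_{i(\ell)} \otimes_\cB \cM_{i(\ell)}$ and using centeredness to place $a_\ell \xi_{i(\ell)}$ in $\cH_{i(\ell)}^\circ$. For a general $Vh$ with $h = b \xi_j + h^\circ \in \cB \xi_j \oplus \cH_j^\circ$, the $b\xi$ part reduces to the $T\xi$ case by right $\cB$-linearity. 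For the $h^\circ$ part, if $i(k) \neq j$ then $h^\circ$ sits inside $\cM_{i(k)}$ as a direct summand and the same unfolding produces a vector in $\cH_{i(1)}^\circ \otimes_\cB \cdots \otimes_\cB \cH_{i(k)}^\circ \otimes_\cB \cH_j^\circ$, of length $k+1 \geq 2$; if $i(k) = j$, one first decomposes $a_k h^\circ = b' \xi_j + h^{\circ\circ}$ and then proceeds inductively on $k$, using that $i(k-1) \neq j$. In every case the resulting vectors lie in summands of length $\geq 2$ whenever $k \geq 2$, or in $\cH_j^\circ$ only when $k=1$ and $i(1)=j$. Since $V(\cH_j) = \cB\xi \oplus \cH_j^\circ$ meets only the length-$0$ summand $\cB\xi$ and the length-$1$ summand labeled by $j$, we obtain $V^*TV = 0$ in all the non-exceptional cases. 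The main obstacle is the case split when $i(k) = j$ and the bookkeeping of summands across the induction; the key invariant is that centeredness of each $a_\ell \xi_{i(\ell)}$ is preserved through the unfolding.

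For part (3), any element of the $*$-algebra generated by $\rho_1(\cA_1), \dots, \rho_N(\cA_N)$ can be written, via the decomposition $a_\ell = (a_\ell - \ip{\xi_{i(\ell)}, a_\ell \xi_{i(\ell)}}) + \ip{\xi_{i(\ell)}, a_\ell \xi_{i(\ell)}}$ on each factor and subsequent regrouping, as a sum of an element of $\cB$ and alternating products of fully centered elements with non-adjacent indices repeated. Part (2) annihilates each such alternating product except those of length $1$ indexed by $j$, which map to their underlying centered element in $\cA_j$; the $\cB$-part maps to itself in $\cB \subseteq \cA_j$. Norm continuity of $\mathcal{E}_j$ (being completely positive, hence bounded) together with norm closedness of $\cA_j$ then extends the conclusion from the $*$-algebra to its $\mathrm{C}^*$-closure.
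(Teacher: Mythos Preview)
Your proposal is correct and follows essentially the same approach as the paper. The one place where the paper is slightly more economical is in part~(2) for the case $i(k) = j$: rather than decomposing $a_k h^\circ$ and tracking vectors, the paper simply invokes the bimodule identity from part~(1) to write $\mathcal{E}_j[T] = \mathcal{E}_j[T']\, a_k$ with $T' = \rho_{i(1)}(a_1)\cdots\rho_{i(k-1)}(a_{k-1})$, and then applies the already-handled case $i(k-1) \neq j$ to get $\mathcal{E}_j[T'] = 0$.
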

	
	\begin{proof}
		(1) follows by direct computation.
		
		(2) First, suppose that $i(k) \neq j$.  Then by construction of the free product space, we have
		\[
		\rho_{i(1)}(a_1) \dots \rho_{i(k)}(a_k)[\cB \xi \oplus \mathcal{H}_j^\circ] \subseteq \cH_{i(1)}^\circ \otimes_{\cB} \dots \otimes_{\cB} \cH_{i(j)}^\circ \otimes (\cB \xi \oplus \cH_j^{\circ}),
		\]
		and hence the image of $\rho_{i(1)}(a_1) \dots \rho_{i(k)}(a_k) V$ is orthogonal to $\cB \xi \oplus \cH_j^{\circ}$, and so $V^* \rho_{i(1)}(a_1) \dots \rho_{i(k)}(a_k) V = 0$.  Thus, the result holds in this case.
		
		Next, suppose that $i(k) = j$.
		If $k = 1$, then there is nothing to prove.
		So suppose $k > 1$.
		Let $T' = \rho_{i(1)}(a_1) \dots \rho_{i(k-1)}(a_{k-1})$.
		Then $\mathcal{E}_j[T] = \mathcal{E}_j[T'] \rho_j(a_k)$ by the bimodule property for $\mathcal{E}_j$.
		Then we note that $\mathcal{E}_j[T'] = 0$ by the same reasoning as above.
		
		(3) We recall that the $*$-algebra generated by $\rho_j(\cA_j)$ for $j = 1$, \dots $N$ is \emph{spanned} by elements of the form $\rho_{i(1)}(a_1) \dots \rho_{i(k)}(a_k)$ where $k \geq 1$ and $a_j \in \mathcal{L}((\cH_{i(j)})_{\cB})$ and $i(1) \neq i(2) \neq \dots \neq i(k)$ and $\ip{\xi_j, a_j \xi_j} = 0$.  (One proves that arbitrary products of elements from $\cA_j$ are in the span of these reduced products, by induction on the length of the product; see the proof of \cite[Proposition 1.3]{Voiculescu1995} or \cite[Lemma 5.2.8]{JekelThesis}.)  By the previous step, $\mathcal{E}_j[\rho_{i(1)}(a_1) \dots \rho_{i(k)}(a_k)]$ is equal to zero unless $k = 1$ and $i(1) = j$, in which case it is $\mathcal{E}[\rho_j(a_1)] = a_1$.  In either case, the image is in $\cA_j$.  Hence, $\mathcal{E}_j$ maps the $*$-algebra generated by $\rho_1(\cA_1)$, \dots, $\rho_N(\cA_N)$ into $\cA_j$.  By density/continuity, it maps the $\mathrm{C}^*$-algebra generated by $\rho_1(\cA_1)$, \dots, $\rho_N(\cA_N)$ into $\cA_j$.
	\end{proof}
	
	\section{Realization of operator-valued convolution powers}
	\label{sec:opconv}
	
	For $\cB$-valued laws $\mu$ and $\nu$, and $\eta: \cB \to \cB$ completely positive, we say that $\nu$ is the $\eta$-free convolution power of $\mu$, or $\nu = \mu^{\boxplus \eta}$, if we have $R_\nu^{(n)} = \eta^{(n)} \circ R_\mu^{(n)}$ on $B_\delta(0)$ in $M_n(\cB)$ for all $n$, for some $\delta > 0$.
	In this case, by Proposition \ref{prop: inversion}, $\nu$ is uniquely determined by $\mu$ and $\eta$.
	Given $\mu$ and $\eta$, the $\eta$ free convolution power of $\mu$ does not always exist in general, but it does always exist if $\eta - \id$ is completely positive \cite{ABFN2013,Shlyakhtenko2013}.
	In fact, when $\eta - \id$ is completely positive, the following theorem describes a general construction to realize the $\eta$ free convolution power of $\mu$.
	
	\begin{theorem} \label{thm: realization}
		Let $(\cA,E)$ be a $\cB$-valued probability space.  Let $\eta: \cB \to \cB$ be a linear map such that $\eta - \id$ is completely positive.  Assume that $X$ and $V$ are freely independent over $\cB$, $X$ is a self-adjoint operator with $\cB$-valued distribution $\mu$, and $V$ satisfies
		\begin{align}
			Vb_1V^*b_2V &= V b_1 \eta(b_2) && \text{ for } b_1, b_2 \in \cB \label{eq: V condition 1} \\
			E[VbV^*] &= b && \text{ for } b \in \cB. \label{eq: V condition 2}
		\end{align}
		Consider the map $\tilde{E}[T] = E[VTV^*]$ for $T \in \cA$ (which is a conditional expectation by \eqref{eq: V condition 2} and Remark \ref{rem: conditional expectation automatic}).
		Let $\nu$ be the $\cB$-valued law of $V^*XV$ with respect to the expectation $\tilde{E}$.  Then $\nu = \mu^{\boxplus\eta}$.
	\end{theorem}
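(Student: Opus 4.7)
The plan is to follow the Haagerup--Lehner--Dykema strategy flagged in the introduction: verify that $\eta \circ R_\mu$ satisfies the functional equation characterizing $R_\nu$. Conditions \eqref{eq: V condition 1}, \eqref{eq: V condition 2}, and the freeness of $X$ and $V$ all persist under matrix amplifications $V \mapsto V \otimes I_n$, $X \mapsto X \otimes I_n$, $\eta \mapsto \eta^{(n)}$, so by Proposition \ref{prop: inversion} it suffices to work at $n=1$ and prove the single identity
\[
E[V(z - V^*XV)^{-1} V^*] = E[(y - X)^{-1}]
\]
for $y$ in an appropriate ``large'' neighborhood (so that $G_\mu(y)$ is defined and small) and $z := \eta(y) + (\id - \eta)(G_\mu(y)^{-1})$; this asserts $G_\nu(z) = G_\mu(y)$, or equivalently $z - G_\nu(z)^{-1} = \eta(R_\mu(G_\nu(z)))$, which is exactly $R_\nu = \eta \circ R_\mu$.

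The first step would be to extract from the hypotheses the full moment structure of $\{V, V^*\}$. Iterating \eqref{eq: V condition 1} collapses every interior factor ``$VbV^*b'V$'' into ``$Vb\eta(b')$,'' and combining with \eqref{eq: V condition 2} one obtains, for $b_1, \dots, b_{2k+1} \in \mathcal{B}$,
\[
E[Vb_1 V^* b_2 V b_3 V^* \cdots b_{2k} V b_{2k+1} V^*] = b_1 \eta(b_2) b_3 \eta(b_4) \cdots \eta(b_{2k}) b_{2k+1}.
\]
Then, for $z$ with $z^{-1}$ of small norm, one expands geometrically
\[
G_\nu(z) = \sum_{n \ge 0} E[B(XB)^n], \qquad B := V z^{-1} V^*,
\]
and splits $B = z^{-1} + B^\circ$ with $B^\circ = B - E[B]$ centered. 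Freeness of $X$ from the $*$-algebra generated by $V, V^*$, and $\mathcal{B}$, together with the vanishing of expectations of alternating centered products, reduces each $E[B(XB)^n]$ to a sum over noncrossing pair diagrams on the $B^\circ$-slots; each surviving diagram evaluates in closed form via the moment identity above and the $\mathcal{B}$-valued moments of $X$.

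The main obstacle is the combinatorial reorganization that identifies this sum with $G_\mu(y)$ and produces the implicit fixed-point relation $y = z - (\eta - \id)(y - G_\mu(y)^{-1})$, which is solvable for $y$ near $z$ by an inverse-function argument since $(\eta - \id)(y - G_\mu(y)^{-1})$ is small there. A likely cleaner alternative, closer in spirit to \cite{Haagerup1997,Lehner2001,Dykema2006}, is to argue directly from the resolvent identity
\[
V(z - V^*XV)^{-1} V^* = V z^{-1} V^* + V z^{-1} V^* X V (z - V^*XV)^{-1} V^*,
\]
iteratively applying \eqref{eq: V condition 1} to propagate every interior ``$V^* \cdot V$'' outward as an application of $\eta$, and using freeness to decouple the $X$-expectation from the $V, V^*$-factors, closing up into the desired functional equation without explicit combinatorics. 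In either route the crux is the bookkeeping that matches the implicit subordination $y = G_\mu^{-1}(G_\nu(z))$ with the prescribed formula in $\eta$ and $G_\mu$.
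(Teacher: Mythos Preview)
Your target identity and the reduction to a single matrix level are correct, and the moment formula you derive for alternating $V,V^*$ words is right. But neither of your two routes is carried through, and the specific mechanism each proposes runs into trouble. In your second route you want to use \eqref{eq: V condition 1} to collapse interior ``$V^* \cdot V$'' blocks in the expansion of $V(z - V^*XV)^{-1}V^*$; but those interior blocks are $V^* X V$ with $X \notin \cB$, so \eqref{eq: V condition 1} simply does not apply. In your first route the claim that only ``noncrossing pair diagrams on the $B^\circ$-slots'' survive is not correct as stated: the higher moments $E[(B^\circ)^k]$ are nonzero, so the full noncrossing-partition machinery enters and you are back at exactly the combinatorial reorganization you flag as the main obstacle.

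The paper sidesteps all of this by using \eqref{eq: V condition 1} in the \emph{reverse} direction. One evaluates $G_\nu$ directly at $z^{-1} + \eta \circ R_\mu(z)$ and expands $V(z^{-1} + \eta \circ R_\mu(z) - V^*XV)^{-1}V^*$ as a geometric series in $z(V^*XV - \eta \circ R_\mu(z))$. The point is that $\eta \circ R_\mu(z)$ lies in $\cB$, and every time it appears there is a $V$ immediately to its left; reading \eqref{eq: V condition 1} backwards gives $V b\,\eta(R_\mu(z)) = V b V^* R_\mu(z) V$, which reinstates a $V$ on the right and allows iteration. After this substitution the whole expression collapses to $[1 - VzV^*(X - R_\mu(z))]^{-1} VzV^*$, now a function only of the free pair $(VzV^*, X)$. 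One then introduces the centered element $\Phi_X(z) = [1 - z(X - R_\mu(z))]^{-1} - 1$ (centered precisely because $G_\mu(z^{-1}+R_\mu(z)) = z$), factors as a geometric series in $(VzV^* - z)\,z^{-1}\Phi_X(z)$, and invokes freeness: every term is an alternating product of centered free elements except the single constant $z$. No diagram combinatorics is needed.
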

	
	In the case $\cB = \C$, condition \eqref{eq: V condition 1} is equivalent to $\eta(1)^{-1/2} V$ being a partial isometry (and in general, if $\eta(1) \in \C$, then \eqref{eq: V condition 1} implies that $\eta(1)^{-1/2} V$ is a partial isometry).
	In particular, when $\cB = \C$, we can take $V = P / E[P]^{1/2}$ for a projection $P$ and correspondingly $\eta(1) = E[P]^{-1}$.
	This leads to $\tilde{E}[VTV^*] = E[PTP] / E[P]$, or $\tilde{E}$ is the natural state on the corner $P\mathcal{A}P$.
	We thus recover the construction of free convolution powers in the scalar case from \cite{NS1996}.
	
	For general $\cB$ and $\eta \geq \id$, Shlyakhtenko \cite{Shlyakhtenko2013} gave a free Fock space construction of an operator $V$ that satisfies the stronger condition $V^*bV = \eta(b)$ for $b \in \cB$ in place of \eqref{eq: V condition 1}, and also satisfies \eqref{eq: V condition 2}.
	This stronger condition unfortunately does not permit $V$ to be a scalar multiple of a projection in the case $\cB = \C$, since if $V = \lambda P$, then $\lambda^2 P = V^* 1 V = \eta(1) \in \C$, which forces $P = 0$ or $1$.
	Besides, an operator $V$ satisfying the weaker \eqref{eq: V condition 1} and \eqref{eq: V condition 2} can be constructed in a simpler way as follows.
	
	\begin{lemma}
		\label{lem:amodel}
		Let $\eta: \cB \to \cB$ such that $\eta - \id$ is completely positive.  Then there exists a $\cB$-valued probability space $(\cA,E)$ and an operator $V$ satisfying \eqref{eq: V condition 1} and \eqref{eq: V condition 2}.
	\end{lemma}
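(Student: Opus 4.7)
The plan is to realize $V$ as a ``rank-one'' right-$\cB$-module map on the pointed $\cB$-$\cB$-correspondence
\[
\cH := \cB\xi \oplus (\cB \otimes_{\eta-\id}\cB),
\]
sending $\xi$ to the vector $v := \xi + 1 \otimes 1$ and annihilating the orthogonal complement.

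First I would form $\cB \otimes_{\eta-\id}\cB$ using Lemma~\ref{lem: tensor product over CP map}; this is where the hypothesis that $\eta - \id$ is completely positive enters. Taking the direct sum with the trivial correspondence $\cB\xi$ (a copy of $\cB$ as a $\cB$-$\cB$-correspondence with $\xi$ corresponding to $1$) produces a pointed $\cB$-$\cB$-correspondence $(\cH, \xi)$, and Lemma~\ref{lem:unitvector} yields a conditional expectation $E[T] := \langle \xi, T\xi\rangle$ on $\mathcal{L}(\cH_\cB)$. Setting $v := \xi + 1\otimes 1$, I would define $V(h) := v \cdot \langle \xi, h\rangle$, a bounded adjointable right-$\cB$-modular operator with adjoint $V^*(h) = \xi \cdot \langle v, h\rangle$ and operator norm at most $\|v\| = \|\eta(1)\|_\cB^{1/2}$. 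Taking $\cA := \mathrm{C}^*(\cB, V) \subseteq \mathcal{L}(\cH_\cB)$, the pair $(\cA, E|_\cA)$ is the desired $\cB$-valued probability space.

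The pivotal identity is the inner product
\[
\langle 1\otimes 1, b(1\otimes 1)\rangle_{\eta-\id} = (\eta - \id)(b) \quad (b \in \cB),
\]
which is immediate from the definition of the pre-inner product in Lemma~\ref{lem: tensor product over CP map}. Combined with $\langle \xi, v\rangle = 1$, this will show that $V^* b V$ acts as multiplication by $b + (\eta - \id)(b) = \eta(b)$ on the summand $\cB\xi$ and vanishes on $\cH^\circ := \cB \otimes_{\eta-\id}\cB$. Post-multiplying by $Vb_1$ and noting that $V$ also vanishes on $\cH^\circ$ then yields \eqref{eq: V condition 1}. Condition \eqref{eq: V condition 2} is a one-line check: since $V^*\xi = \xi$, we get $E[VbV^*] = \langle \xi, V(\xi b)\rangle = \langle \xi, vb\rangle = b$.

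There is no real obstacle once $\cH$ and $v$ are written down; the computations reduce to bookkeeping with the definitions of the inner products and the right-$\cB$-linearity of $V$ and $V^*$. The only ``discovery'' step is the decomposition $v = \xi + 1 \otimes 1$: the $\xi$-summand is precisely what ensures \eqref{eq: V condition 2}, while the $1\otimes 1$-summand contributes exactly the $(\eta - \id)(b)$ term needed to promote $b$ to $\eta(b)$ in \eqref{eq: V condition 1}.
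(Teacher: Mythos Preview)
Your construction is correct and essentially identical to the paper's: the paper builds the same correspondence $\cH=\cB\oplus(\cB\otimes_{\eta-\id}\cB)$ and defines $V(b\oplus h)=b\oplus\zeta b$ with $\zeta=1\otimes1$, which is precisely your rank-one map $h\mapsto v\langle\xi,h\rangle$ written out on the direct sum. The only cosmetic differences are that the paper takes $\cA=\mathcal{L}(\cH_\cB)$ rather than $\mathrm{C}^*(\cB,V)$ and verifies \eqref{eq: V condition 1} by a direct chain of equalities on a general vector $b\oplus h$ rather than first isolating the action of $V^*b_2V$ on the two summands.
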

	
	\begin{proof}
		Let $\psi = \eta - \id$.
		Let $\cH^\circ = \cB \otimes_\psi \cB$ be the $\cB$-$\cB$-correspondence associated to $\psi$ from Lemma \ref{lem: tensor product over CP map}, and let $\zeta$ be the vector $1 \otimes 1$ in $\cH^\circ$.
		Let $\cH = \cB \oplus \cH^\circ$, where $\cB$ is viewed as the trivial $\cB$-$\cB$ correspondence.
		Let $\cA$ be the space of right $\cB$-linear and adjointable operators on $\cH$, and let $E: \cA \to \cB$ be given by $E[T] = \ip{1 \oplus 0,T(1 \oplus 0)}$.  Let $V \in \cA$ be given by
		\[
		V(b \oplus h) = b \oplus \zeta b.
		\]
		It is straightforward to check that $V$ is adjointable with
		\[
		V^*(b \oplus h) = (b + \ip{\zeta,h}) \oplus 0.
		\]
		To check \eqref{eq: V condition 1}, note that for $b_1, b_2, b \in \cB$ and $h \in \cH^\circ$,
		\begin{align*}
			Vb_1 V^*b_2 V(b \oplus h) &= Vb_1 V^*b_2(b \oplus \zeta b) \\
			&= Vb_1V^*(b_2b \oplus b_2 \zeta b) \\
			&= Vb_1((b_2b + \ip{\zeta,b_2\zeta b}) \oplus 0) \\
			&= Vb_1((b_2 + \ip{\zeta,b_2\zeta})b \oplus 0) \\
			&= Vb_1 (\eta(b_2)b \oplus 0) \\
			&= Vb_1 \eta(b_2)(b \oplus h),
		\end{align*}
		where the last line follows because
		\[
		Vb_1 \eta(b_2)(0 \oplus h) = V(0 \oplus b_1 \eta(b_2)h) = 0.
		\]
		Thus, \eqref{eq: V condition 1} holds.  Moreover, since $V^*(1 \oplus 0) = (1 \oplus 0)$, we get that $E[VTV^*] = E[T]$ for $T \in \cA$, and in particular, $E[VbV^*] = b$ for $b \in \cB$, so \eqref{eq: V condition 2} holds.
	\end{proof}
	
	\begin{remark}
		Conversely, the condition that $\eta \geq \id$ is necessary in order for there to exist an operator $V$ satisfying \eqref{eq: V condition 1} and \eqref{eq: V condition 2}.  Indeed, if such a $V$ exists, then for $b \in \cB$,
		\[
		\eta(b^*b) = E[V \eta(b^*b) V^*]
		= E[VV^*b^*bVV^*] = E[(bVV^*)^*(bVV^*)] \geq E[bVV^*]^* E[bVV^*]
		= b^*b.
		\]
		The same equation also holds for $b \in M_n(\cB)$ and $V^{(n)} = V \otimes I_n \in M_n(\cB)$, and so $\eta - \id$ is completely positive.  Hence, although the proof of Theorem \ref{thm: realization} below does not use directly use that $\eta \geq \id$, the statement is vacuous outside this case.
	\end{remark}
	
	\begin{remark} \label{rem: lack of traciality}
		It is also natural to ask when the operator $V$ can be realized in a \emph{tracial} von Neumann algebra.
		Unfortunately, the situations where this can happen are rather restrictive, even with the weakened conditions \eqref{eq: V condition 1} and \eqref{eq: V condition 2}.
		Indeed, assume that $\cA$ is equipped with a faithful trace $\tau$ and that $E$ is a trace-preserving conditional expectation.
		Then for $b_1, b_2 \in \cB$, applying in order traciality, \eqref{eq: V condition 1}, traciality, \eqref{eq: V condition 2}, we have
		\begin{align*}
			\tau(V^*VV^*b_1V b_2) &= \tau(V^*b_1 V b_2 V^*V) \\
			\tau(V^*V \eta(b_1) b_2) &= \tau(V^*b_1 V b_2 \eta(1)) \\
			\tau(V \eta(b_1) b_2 V^*) &= \tau(b_1 V b_2 \eta(1) V^*) \\
			\tau(\eta(b_1) b_2) &= \tau(b_1 b_2 \eta(1)) = \tau(\eta(1) b_1 b_2),
		\end{align*}
		and since $b_2$ is arbitrary, this yields $\eta(b_1) = \eta(1) b_1$.
		By taking adjoints, $\eta(b) = b \eta(1)$ as well, and so $\eta(1)$ is in the center of $\cB$.
		Hence, the only situation where these relations can be satisfied using a trace-preserving conditional expectation is if $\eta$ is given by multiplication by an element of the center of $\cB$.
	\end{remark}
	
	Now we turn to the proof of Theorem \ref{thm: realization}.  Using a similar method to Haagerup \cite[\S 3]{Haagerup1997} and Lehner's \cite[Theorem 3.1]{Lehner2001} analytic proof that the $R$-transform is additive under free convolution, we are able to prove the result from scratch using only the definition of free independence.
	
	\begin{proof}[Proof of Theorem \ref{thm: realization}]
		Let $\nu$ be the $\cB$-valued law of $V^*XV$.
		We must show that $R_\nu^{(n)}(z) = \eta^{(n)} \circ R_\mu^{(n)}(z)$ for $z$ in $B_\delta(0)$ in $M_n(\cB)$ for all $n$, for some $\delta > 0$.
		Note that the $R$-transform being $(\tilde{G}_\mu^{(n)})^{-1}(z) - z$ can be expressed as $G_\mu^{(n)}(z^{-1} + R_\mu^{(n)}(z)) = z$ for invertible $z \in B_\delta(0)$.
		In light of analytic continuation (see e.g. \cite[Corollary 3.9.7]{JekelThesis}), since the set of invertible $z$ contains an open ball in $B_\delta(0)$, it suffices to prove that $G_\nu^{(n)}(z^{-1} + \eta^{(n)} \circ R_\mu^{(n)}(z)) = z$ for invertible $z$ in a neighborhood of $0$.
		We can also write this as
		\[
		E^{(n)}[V^{(n)}(z^{-1} + \eta^{(n)} \circ R_\mu^{(n)}(z) - (V^{(n)})^*X^{(n)} V^{(n)})^{-1}(V^{(n)})^*] = z,
		\]
		where $X^{(n)}$ and $V^{(n)}$ are the images of $X$ and $V$ under the inclusion $\cA \to M_n(\cA)$. In the following argument, we will omit the superscript $(n)$'s for ease of notation.  This should not cause confusion since the argument is a manipulation of functional and power series identities and $n$ remains fixed throughout.
		
		Now for $z$ in a neighborhood of zero,
		\begin{align*}
			V(z^{-1} + \eta \circ R_\mu(z) - V^*XV)^{-1}V^* &= V(1 - z(V^*XV - \eta \circ R_\mu(z)))^{-1} z V^* \\
			&= \sum_{k=0}^\infty V[z(V^*XV - \eta \circ R_\mu(z))]^k z V^*.
		\end{align*}
		When we expand $V[z(V^*XV - \eta \circ R_\mu(z))]^k$ by the distributive property, we get terms of the form $V$ times a product of $z V^*XV$'s and $z \eta \circ R_\mu(z)$'s.
		Applying the identity \eqref{eq: V condition 1} repeatedly as $Vz\eta\circ R_\mu(z) = VzV^*R_\mu(z)V$ beginning with the left-most occurrence of $z\eta\circ R_\mu(z)$ (which necessarily follows a $V$), we may replace all occurrences of $\eta\circ R_\mu(z)$ with $V^* R_\mu(z) V$ in the formula above.
		This yields
		\begin{align*}
			V(z^{-1} + \eta \circ R_\mu(z) - V^*XV)^{-1}V^*
			&= \sum_{k=0}^\infty V[z(V^*XV - \eta \circ R_\mu(z))]^k z V^* \\
			&= \sum_{k=0}^\infty V[z(V^*XV - V^*R_\mu(z) V]^k z V^* \\
			&= \sum_{k=0}^\infty [VzV^*(X - R_\mu(z))]^k VzV^* \\
			&= [1 - VzV^*(X - R_\mu(z))]^{-1} VzV^*.
		\end{align*}
		This is the operator whose expectation we want to equal $z$.  Now our goal is to expand this operator in terms of alternating strings of centered elements that are freely independent, which we know have expectation zero.  Thus, we introduce a certain element from $C^*(\cB,X)$ with expectation zero.  Let
		\begin{align*}
			\Phi_X(z) &= [1 - z(X - R_\mu(z))]^{-1} - 1 \\
			&= [1 - z(X - R_\mu(z))]^{-1} - [1 - z(X - R_\mu(z))][1 - z(X - R_\mu(z))]^{-1} \\
			&= z(X - R_\mu(z)) [1 - z(X - R_\mu(z))]^{-1}.
		\end{align*}
		Note that $\Phi_X(z)$ and $z^{-1} \Phi_X(z) = (X - R_\mu(z)) [1 - z(X - R_\mu(z))]^{-1}$ are well-defined in a neighborhood of $0$.
		Moreover,
		\[
		E[\Phi_X(z)] = E[(z^{-1} + R_\mu(z) - X)^{-1} z^{-1}] - 1 = G_\mu(z^{-1} + R_\mu(z)) z^{-1} - 1 = 0.
		\]
		We also note that $V^*zV - z$ has expectation zero by \eqref{eq: V condition 2}.
		
		To express $[1 - VzV^*(X - R_\mu(z))]^{-1} VzV^*$ in terms of $\Phi_X(z)$, we first write
		\begin{align*}
			1 - VzV^*(X - R_\mu(z)) &= 1 - z(X - R_\mu(z)) - (VzV^* - z)(X - R_\mu(z)) \\
			&= \left[ 1  - (VzV^* - z)(X - R_\mu(z))[1 - z(X - R_\mu(z))]^{-1} \right] [1 - z(X - R_\mu(z))] \\
			&= \left[ 1  - (VzV^* - z)z^{-1} \Phi_X(z) \right] [1 - z(X - R_\mu(z))].
		\end{align*}
		Hence,
		\begin{align*}
			[1 - VzV^*(X - R_\mu(z))]^{-1}VzV^* &= [1 - z(X - R_\mu(z))]^{-1} \left[ 1  - (VzV^* - z)z^{-1} \Phi_X(z) \right]^{-1} VzV^* \\
			&= (1 + \Phi_X(z)) \left[ 1  - (VzV^* - z)z^{-1} \Phi_X(z) \right]^{-1} [z + (VzV^* - z)].
		\end{align*}
		Then expanding the geometric series, we get
		\[
		(1 + \Phi_X(z)) \sum_{k=0}^\infty \left[(VzV^* - z)z^{-1} \Phi_X(z) \right]^k [z + (VzV^* - z)],
		\]
		or equivalently
		\begin{align}
			& \sum_{k=0}^\infty \left[(VzV^* - z)z^{-1} \Phi_X(z) \right]^k z \label{eq: final sum 1} \\
			+& \sum_{k=0}^\infty \left[(VzV^* - z)z^{-1} \Phi_X(z) \right]^k(VzV^* - z) \label{eq: final sum 2} \\
			+& \sum_{k=0}^\infty \Phi_X(z) \left[(VzV^* - z)z^{-1} \Phi_X(z) \right]^k z \label{eq: final sum 3} \\
			+& \sum_{k=0}^\infty \Phi_X(z) \left[(VzV^* - z)z^{-1} \Phi_X(z) \right]^k (VzV^* - z). \label{eq: final sum 4}
		\end{align}
		In the first sum, the $k = 0$ term is equal to $z$.  All the other terms in the four sums are alternating products of freely independent elements with expectation zero, so that freeness implies that they have expectation zero.  Hence,
		\[
		G_\nu(z^{-1} + \eta \circ R_\mu(z)) = E[[1 - VzV^*(X - R_\mu(z))]^{-1} VzV^*] = z,
		\]
		as desired.
	\end{proof}
	
	\section{Subordination and conditional expectations}
	\label{sec:condexp}
	
	Our next goal is to prove an analog of Biane's theorem on analytic subordination and conditional expectations for free additive convolution \cite[Theorem 3.1]{Biane1998}.  For motivation, we first sketch the result in the scalar-valued case; here we assume that the non-commutative probability spaces in question are tracial von Neumann algebras and we denote by $\mathrm{W}^*(X_1,\dots,X_m)$ the von Neumann subalgebra generated by $X_1$, \dots, $X_n$.  Biane showed that if $X$ and $Y$ are freely independent self-adjoint elements with distributions $\mu$ and $\nu$ respectively, then there is an analytic function $F: \H \to \H$ such that
	\[
	G_{\mu \boxplus \nu}(z) = G_\mu \circ F(z),
	\]
	but even better than that,
	\[
	E_{\mathrm{W}^*(X)}[(z - X + Y)^{-1}] = (F(z) - X)^{-1},
	\]
	where $E_{\mathrm{W}^*(X)}$ is the conditional expectation from $\mathrm{W}^*(X,Y)$ to $\mathrm{W}^*(X)$.  Furthermore, Biane stated this equation in terms of a certain Markov kernel giving ``transition probabilities from $X$ to $X + Y$.''  In the von Neumann algebraic framework, a Markov kernel is equivalent to a unital completely positive map $\Phi$ from one von Neumann algebra to another.  In this case, the mapping goes
	\[
	\Phi: L^\infty(\Spec(X+Y),\mu \boxplus \nu) \xrightarrow{\cong} \mathrm{W}^*(X+Y) \xrightarrow{\operatorname{incl}} \mathrm{W}^*(X,Y) \xrightarrow{E_{\mathrm{W}^*(X)}} \mathrm{W}^*(X) \xrightarrow{\cong} L^\infty(\Spec(X),\mu),
	\]
	where the second map is the inclusion and the third map is the conditional expectation.  Biane's result says that the function $x \mapsto (z - x)^{-1}$ in $L^\infty(\Spec(X+Y),\mu \boxplus \nu)$ is sent by $\Phi$ to the function $x \mapsto (F(z) - x)^{-1}$ in $L^\infty(\Spec(X),\mu)$.
	
	We pursue an analogous result in the setting of free convolution powers.  First, consider the scalar-valued setting.  Let $t \geq 1$, and let $P$ be a projection of trace $1/t$ freely independent of $X$.  The role of $X + Y$ in the story for additive convolution will now be played by $t PXP$ which is an element of the von Neumann algebra $P\mathrm{W}^*(X,P)P$, where the trace is given by $\tau_P(T) = \tau(PTP) / \tau(P) = t \tau(PTP)$.  We then consider the completely positive map $\Phi: L^\infty(\Spec(tPXP),\mu^{\boxplus t}) \to L^\infty(\Spec(X),\mu))$ given as follows
	\begin{multline*}
		\Phi: L^\infty(\Spec(tPXP),\mu^{\boxplus t}) \xrightarrow{\cong} \mathrm{W}^*(tPXP) \xrightarrow{\operatorname{incl}} P \mathrm{W}^*(X,P) P \\
		\xrightarrow{t \operatorname{incl}} \mathrm{W}^*(X,P) \xrightarrow{E_{\mathrm{W}^*(X)}} \mathrm{W}^*(X) \xrightarrow{\cong} L^\infty(\Spec(X),\mu).
	\end{multline*}
	The multiplication by $t$ in the middle map is necessary to make $\Phi$ unital; as further motivation, consider that $t$ times the inclusion $P\mathrm{W}^*(X,P)P \to \mathrm{W}^*(X,P)$ is the adjoint of the compression $\mathrm{W}^*(X,P) \to P\mathrm{W}^*(X,P)P$ with respect to the GNS inner products associated to the trace.  In this framework, the analog of Biane's result on subordination and conditional expectation would be that
	\[
	E_{\mathrm{W}^*(X)}[t (zP - tPXP)^{-1}] = (F(z) - X)^{-1},
	\]
	where $F$ is a function satisfying $G_{\mu^{\boxplus t}} = G_\mu \circ F$.  Here we view $P$ as the unit in $P \mathrm{W}^*(X,P) P$.  We can write this equivalently as
	\[
	E_{\mathrm{W}^*(X)}[t P(z - tPXP)^{-1}P] = (F(z) - X)^{-1}.
	\]
	
	In the more general $\cB$-valued setting of Theorem \ref{thm: realization}, we need to replace $P$ by the ``partial-isometry-like'' operator $V$.
	Moreover, conditional expectations onto $\mathrm{C}^*$-subalgebras do not exist in general, and the partial isometry $V$ often cannot be realized with respect to a conditional expectation $E$ that preserves some trace by Remark \ref{rem: lack of traciality}.
	Therefore, we will restrict our attention to a $\cB$-valued probability space $(\cA,E)$ which is the free product of $(\cA_1,E_1)$ and $(\cA_2,E_2)$ since the free product has a canonical conditional expectation $\mathcal{E}_1: \cA \to \cA_1$ described in Proposition \ref{prop: conditional expectation} (this proposition holds without assuming that either $E_1$ or $E_2$ is faithful or preserves any trace).
	The $\cB$-valued subordination/conditional expectation relation that we want is thus
	\[
	\mathcal{E}_1[V(z - V^*XV)^{-1}V^*] = (F(z) - X)^{-1}.
	\]
	
	We will prove the existence of such an $F$ using the same techniques as Theorem~\ref{thm: realization}, which again were inspired by Haagerup and Lehner's proof in the case of additive convolution (although it is also possible to give a more combinatorial proof along similar lines to Biane's argument for additive convolution \cite[Proposition 3.2]{Biane1998}).  We remark that for operator-valued convolution powers, unlike the case of additive free convolution, the existence of a subordination function $F: \H(\cB) \to \H(\cB)$ such that $G_\nu = G_\mu \circ F$ is immediate, since manipulation of functional identities shows that $F(z) = \eta^{-1}(z) + (\id - \eta^{-1})(F_\nu(z))$.  However, the conditional expectation formula is not obvious from this fact.  In any case, our proof below gives an independent argument for the existence of $F$ satisfying $G_\nu = G_\mu \circ F$.
	
	Finally, while we have presented the result for additive convolution powers in parallel with additive convolution, one could also view Theorem \ref{thm: subordination} as a subordination result for a special case of multiplicative convolution since the operator model is multiplicative in nature.  For general results on subordination for multiplicative convolution, see \cite[Theorems 3.5 and 3.6]{Biane1998}, \cite[\S 3]{BB2007subordination}, and \cite{BSTV2015multiplicative}.
	
	\begin{theorem} \label{thm: subordination}
		Let $(\cA_1,E_1)$ and $(\cA_2,E_2)$ be $\cB$-valued probability spaces and $(\cA,E)$ their free product.  Let $X \in \cA_1$ be self-adjoint and let $V \in \cA_2$ satisfy \eqref{eq: V condition 1} and \eqref{eq: V condition 2}.  Let $\mu$ be the $\cB$-valued law of $X$ and let $\nu$ be the $\cB$-valued law of $V^*XV$ with respect to the conditional expectation $\tilde{E}[T] = E[VTV^*]$.
		
		Then there exists a unique fully matricial function $F: \H(\cB) \to \H(\cB)$ such that $G_\nu = G_\mu \circ F$, and moreover $F$ satisfies
		\[
		\mathcal{E}_1[V(z - V^*XV)^{-1}V^*] = (F(z) - X)^{-1} \text{ for } z \in \H(\cB)
		\]
		where $\mathcal{E}_1: \cA \to \cA_1$ is the canonical conditional expectation from the free product construction (Proposition \ref{prop: conditional expectation}).
	\end{theorem}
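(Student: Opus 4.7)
The plan is to adapt the algebraic manipulation in the proof of Theorem~\ref{thm: realization}, applying the free-product conditional expectation $\mathcal{E}_1$ in place of $E$ and invoking Proposition~\ref{prop: conditional expectation}(2) to kill alternating-word terms. Fix $n$ and $z$ in a small neighborhood of $0$ in $M_n(\cB)$ (suppressing the $(n)$-superscripts as in Theorem~\ref{thm: realization}), and set $w := z^{-1} + \eta \circ R_\mu(z)$. The same derivation as in Theorem~\ref{thm: realization} yields
\[
V(w - V^*XV)^{-1}V^* = (1 + \Phi_X(z))\bigl[1 - (VzV^* - z)z^{-1}\Phi_X(z)\bigr]^{-1}\bigl[z + (VzV^* - z)\bigr],
\]
whose geometric expansion is the sum of the four series \eqref{eq: final sum 1}--\eqref{eq: final sum 4}.

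Write $\phi := \Phi_X(z) \in \cA_1$ and $\psi := VzV^* - z \in \cA_2$; both are centered ($E_1[\phi] = 0$, $E_2[\psi] = 0$), as established in the proof of Theorem~\ref{thm: realization}. Since each $E_j$ is $\cB$-bimodular and $z^{\pm 1} \in \cB$, every summand in \eqref{eq: final sum 1}--\eqref{eq: final sum 4} is either (i) the $\cB$-scalar $z$ (the $k = 0$ term of \eqref{eq: final sum 1}), (ii) the centered element $\phi z \in \cA_1$ (the $k = 0$ term of \eqref{eq: final sum 3}), or (iii) a reduced alternating product of centered elements from $\cA_1$ and $\cA_2$ of length at least $2$, once the intervening scalars $z^{-1}$ are absorbed into adjacent centered factors. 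By Proposition~\ref{prop: conditional expectation}(2), $\mathcal{E}_1$ annihilates the type (iii) terms and fixes types (i) and (ii), giving
\[
\mathcal{E}_1[V(w - V^*XV)^{-1}V^*] = z + \phi z = [1 - z(X - R_\mu(z))]^{-1}z = (z^{-1} + R_\mu(z) - X)^{-1}.
\]
Setting $F(w) := z^{-1} + R_\mu(z) \in \cB$, this proves the claimed formula on the image of $z \mapsto w$, which is a neighborhood of infinity in $\H^{(n)}(\cB)$. A direct computation shows $E = E_1 \circ \mathcal{E}_1$ (as both equal the vector state $\ip{\xi,\cdot\,\xi}$ via the inclusion $V$ from Proposition~\ref{prop: conditional expectation}(1)), so applying $E_1$ to both sides recovers $G_\nu(w) = G_\mu(F(w))$ on this neighborhood.

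It remains to extend the conditional expectation formula from the neighborhood of infinity to all of $\H(\cB)$. The function $B(w) := \mathcal{E}_1[V(w - V^*XV)^{-1}V^*]$ is matricial analytic on $\H(\cB)$ valued in $\cA_1$, and on the neighborhood of infinity it is invertible with $B(w)^{-1} + X$ belonging to the closed subspace $\cB \subseteq \cA_1$. The identity principle for matricial analytic functions propagates the relation $B(w)^{-1} + X \in \cB$ throughout the connected component of $\{w \in \H(\cB) : B(w)\text{ is invertible}\}$ containing this neighborhood, defining $F$ there. Combined with the relation $G_\nu = G_\mu \circ F$ and standard subordination arguments (as indicated in the preamble to the theorem), $F$ extends to a matricial analytic map $\H(\cB) \to \H(\cB)$, and the full conditional expectation formula then follows by analytic continuation; uniqueness of $F$ is a consequence of the local injectivity of $G_\mu$ near infinity. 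I expect the main obstacle to be this final extension step — verifying that $B(w)$ remains invertible on a large enough set, and cleanly matching the two \emph{a priori} independent definitions of $F$ — while the algebraic kernel is a direct adaptation of the Haagerup--Lehner-style argument from Theorem~\ref{thm: realization}.
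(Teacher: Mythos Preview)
Your algebraic derivation near infinity matches the paper's almost exactly: you correctly recycle the decomposition \eqref{eq: final sum 1}--\eqref{eq: final sum 4} from Theorem~\ref{thm: realization} and apply Proposition~\ref{prop: conditional expectation}(2) to conclude that under $\mathcal{E}_1$ only the $k=0$ terms of \eqref{eq: final sum 1} and \eqref{eq: final sum 3} survive, yielding $\mathcal{E}_1[V(G_\nu^{-1}(z) - V^*XV)^{-1}V^*] = (G_\mu^{-1}(z) - X)^{-1}$. (One small quibble: the $k=0$ term of \eqref{eq: final sum 2} is the single centered element $\psi \in \cA_2$, not a word of length $\geq 2$; it is still killed by $\mathcal{E}_1$, but via the case $k=1$, $i(1)\neq j$ of Proposition~\ref{prop: conditional expectation}(2), which your trichotomy omits.)

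The genuine gap is the extension step, which you yourself flag. The paper does not appeal to connected components of an invertibility locus, to the formula $F(z) = \eta^{-1}(z) + (\id - \eta^{-1})(F_\nu(z))$ from the preamble, or to any external subordination machinery. Instead it proves directly that $B(w) := \mathcal{E}_1[V(w - V^*XV)^{-1}V^*]$ is invertible on \emph{all} of $\H(\cB)$ via an imaginary-part estimate: if $\im w \geq \epsilon 1$ then $\im\,(w - V^*XV)^{-1} \leq -\epsilon\,\lvert (w - V^*XV)^*\rvert^{-2} \leq -\epsilon\,\norm{w - V^*XV}^{-2}\,1$, and since $\mathcal{E}_1$ is completely positive with $\mathcal{E}_1[VV^*] = E[VV^*] = 1$ (freeness plus \eqref{eq: V condition 2}), one gets $\im B(w) \leq -\epsilon\,\norm{w - V^*XV}^{-2}\,1$. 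This forces $B(w)$ invertible with $\im B(w)^{-1}$ bounded below by a positive multiple of $1$, so $\widehat{F}(w) := B(w)^{-1} + X$ is a well-defined fully matricial map $\H(\cB) \to \H(\cA_1)$; the identity theorem, applied to the relation $\widehat{F} = E[\widehat{F}]$ known near infinity, then shows $\widehat{F}$ actually lands in $\H(\cB)$. Without this estimate, your connected component of $\{w : B(w)\text{ invertible}\}$ could in principle be a proper subset of $\H(\cB)$, and your fallback on the preamble's formula is both incomplete (it presupposes $\eta$ invertible) and, as the preamble itself warns, does not by itself yield the conditional-expectation identity.
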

	
	\begin{proof}
		As before, we omit the superscripts $(n)$ for ease of notation.  First, we show that for $z$ in a neighborhood of zero, we have
		\begin{equation} \label{eq: subordination in neighborhood of infinity}
			\mathcal{E}_1
			[V(G_\nu^{-1}(z) - V^*XV)^{-1}V^*] = (G_\mu^{-1}(z) - X)^{-1}.
		\end{equation}
		Recall that $G_\nu^{-1}(z) = z^{-1} + R_\nu(z) = z^{-1} + \eta \circ R_\mu(z)$.
		From the proof of Theorem \ref{thm: realization}, \[V(G_\nu^{-1}(z) - V^*XV)^{-1}V^* = V(z^{-1} + \eta \circ R_\mu(z) - V^*XV)^{-1} V^*\] can be expressed as the sum \eqref{eq: final sum 1} - \eqref{eq: final sum 4}.  Now the conditional expectation onto $\cA_1$ of an alternating product of centered terms from $\cA_1$ and $\cA_2$ is zero unless the product has length zero, or the product has a single term from $\cA_1$.  Therefore, the only terms in \eqref{eq: final sum 1} - \eqref{eq: final sum 4} that survive under the conditional expectation are the $k = 0$ terms in \eqref{eq: final sum 1} and \eqref{eq: final sum 3}.  Thus,
		\[
		\mathcal{E}_1[V(G_\nu^{-1}(z) - V^*XV)^{-1}V^*] = z + \Phi_X(z) z.
		\]
		Recalling the definition of $\Phi_X(z)$,
		\begin{align*}
			z + \Phi_X(z) z &= (1 + \Phi_X(z))z \\
			&= [1 - z(X - R_\mu(z))]^{-1} z \\
			&= [z^{-1} + R_\mu(z) - X]^{-1} \\
			&= (G_\mu^{-1}(z) - X)^{-1}.
		\end{align*}
		This shows \eqref{eq: subordination in neighborhood of infinity}.  By substituting $G_\nu(z)$ instead of $z$ into \eqref{eq: subordination in neighborhood of infinity}, we deduce that if $z^{-1}$ is sufficiently small, then
		\[
		\mathcal{E}_1[V(z - V^*XV)^{-1}V^*] = (G_\mu^{-1} \circ G_\nu(z) - X)^{-1}.
		\]
		This is the identity that we want with $F = G_\mu^{-1} \circ G_\nu$.
		
		Next, we must prove that $F$ extends to all of $\H(\cB)$.  Let $z \in \H(\cB)$.  Then $\im(z) \geq \epsilon 1$ for some $\epsilon > 0$.
		Now $\im (z - V^*XV) = \im(z) \geq \epsilon 1$.  Thus,
		\begin{align*}
			\im (z - V^*XV)^{-1} &= \im [(z - V^*XV)^{-1} (z^* - V^*XV) (z^* - V^*XV)^{-1}] \\
			&= (z - V^*XV)^{-1} \im (z^* - V^*XV) (z^* - V^*XV)^{-1} \\
			&= -(z - V^*XV)^{-1} \im (z) (z^* - V^*XV)^{-1} \\
			&\leq -\epsilon (z - V^*XV)^{-1}(z^* - V^*XV)^{-1} \\
			&= -\epsilon |(z - V^*XV)|^{-2} \\
			&\leq -\epsilon \norm{z - V^*XV}^{-2} 1.
		\end{align*}
		Therefore,
		\[
		\im \mathcal{E}_1[V(z - V^*XV)^{-1}V^*] \leq -\epsilon \norm{z - V^*XV}^{-2} 
		\mathcal{E}_1[VV^*].
		\]
		Since $VV^*$ is freely independent of $\cA_1$ in the free product, we get $\mathcal{E}_1[VV^*] = E[VV^*] = 1$.  Thus,
		\[
		\im \mathcal{E}_1[V(z - V^*XV)^{-1}V^*] \leq -\epsilon \norm{z - V^*XV}^{-2} \cE_1[VV^*].
		\]
		Hence, $\mathcal{E}_1[V(z - V^*XV)^{-1}V^*]$ has imaginary part less than a negative multiple of $1$, and so by similar reasoning as we used before, $\mathcal{E}_1[V(z - V^*XV)^{-1}V^*]$ is invertible in $\mathcal{L}(H_1)$ and its inverse has imaginary part bounded below by a positive multiple of the identity.  Let $\widehat{F}: \H(\cB) \to \H(\cA_1)$ be given by
		\[
		\widehat{F}(z) = (\mathcal{E}_1[V(z - V^*XV)^{-1}V^*])^{-1} + X.
		\]
		
		We claim that $\widehat{F}$ defines a fully matricial function $\mathbb{H}(\cB) \to \mathbb{H}(\cA_1)$.  Preservation of direct sums and similarity by scalar matrices in Definition \ref{def: matricial function} (1) and (2) follow by straightforward algebraic computations since multiplication by $X^{(n)}$ and $V^{(n)}$ respects direct sums and similarities.  The local boundedness condition Definition \ref{def: matricial function} (3) follows from the fact that $\widehat{F}(z)$ is uniformly bounded for all $z$ with $\im(z) \geq \epsilon$ and $\norm{z} \leq R$ as a consequence of the estimates we made in the process of showing invertibility of $z - V^*XV$ and $\mathcal{E}_1[V(z - V^*XV)^{-1}V^*]$.
		
		Now if $z$ is invertible with $\norm{z^{-1}}$ sufficiently small, we have that $\widehat{F}(z) = F(z)$, so that $\widehat{F}(z)$ actually takes values in $\mathbb{H}(\cB)$.  Hence, we have when $z^{-1}$ is sufficiently small that
		\[
		\widehat{F}(z) = E[\widehat{F}(z)].
		\]
		By the identity theorem (see e.g.\ \cite[Corollary 3.9.7]{JekelThesis}), this equality extends to all of $\H(\cB)$.  Therefore, $\widehat{F}(z)$ always maps $\H(\cB)$ into $\H(\cB)$, and hence provides the desired extension of $F(z)$ to all of $\H(\cB)$.  Moreover, since $G_\nu = G_\mu \circ F$ holds when $z \in \H(\cB)$ with $\norm{z^{-1}}$ sufficiently small, again by the identity theorem this relation extends to all of $\H(\cB)$.  Uniqueness of $F$ follows again by the identity theorem because $F(z) = G_\mu^{-1} \circ G_\nu(z)$ when $z^{-1}$ is sufficiently small.
	\end{proof}
	
	\section{On convolution powers and sums} \label{sec: convolution sums}
	
	In the case where $\eta = n \id$, the free convolution power $\mu^{\boxplus \eta}$ is simply the $n$-fold free additive convolution $\mu \boxplus \dots \boxplus \mu$.  Thus, it is natural to ask how the construction of $\mu^{\boxplus n}$ in Theorem \ref{thm: realization} relates to the construction of free additive convolution by adding copies of the original operator in a free product space.  We will now explain the relationship through an explicit transformation of the underlying $\cB$-$\cB$-correspondences (Hilbert spaces in the scalar-valued case).
	
	\begin{theorem} \label{thm: transformation}
		Let $X$ be an operator on the $\cB$-$\cB$-correspondence $\cH$ with the expectation given by a unit vector $\xi$.  Write $\cH = \cB \xi \oplus \cH^\circ$. Consider the free product correspondence $*_{j=1}^n (\cH_j,\xi_j)$ where $(\cH_j,\xi_j)$ is a copy of $(\cH,\xi)$.  Let $X_j$ be the $j$th copy of $X$ acting on the free product.
		
		Let $\cK$ be the $\cB$-$\cB$-correspondence $\cB^{\oplus n}$, and let $e_j$ be the $j$th canonical basis vector.  Let $\cK^\circ = 0 \oplus \cB^{\oplus (n-1)}$ which is the orthogonal complement of $\cB e_1$.  Let $V \in \mathcal{L}(\mathcal{K})$ be the operator 
		\[
		V(b_1 \oplus \dots \oplus b_n) = \left( \frac{b_1 + \dots + b_n}{\sqrt{n}} \right)^{\oplus n}.
		\]
		Form the free product correspondence $(\mathcal{K},e_1) * (\cH,\xi)$, and view $V$ and $X$ as operators on $(\mathcal{K},e_1) * (\cH,\xi)$.
		
		There exists an isomorphism of $\cB$-$\cB$-correspondences
		\[
		\Phi: *_{j=1}^n (\cH_j,\xi_j) \to (V(\cH * \cK), V(\xi))
		\]
		(explicitly constructed in the proof below), such that
		\begin{equation} \label{eq: intertwining relation}
			\Phi \circ (X_1 + \dots + X_n) = VXV \circ \Phi.
		\end{equation}
	\end{theorem}
	
	\begin{remark}
		Note that according to Theorem \ref{thm: realization}, if $\mu^{\boxplus n \id}$ is the $\cB$-valued law of $X$, then $\mu^{\oplus n}$ is the law of the operator $VXV$ on the pointed $\cB$-$\cB$-correspondence $V[(\cH,\xi) * (\cK,e_1)]$, where the unit vector is given by $V$ applied to the original unit vector.  Meanwhile, $\mu^{\boxplus n}$ is also the free convolution of $n$ copies of $\mu$, and so it is the $\cB$-valued law of the operator $X_1 + \dots + X_n$ on $*_{j=1}^n (\cH_j,\xi_j)$.  Thus, $VXV$ and $X_1 + \dots + X_n$ must have the same $\cB$-valued law, and Theorem \ref{thm: transformation} is asserting that this fact can be witnessed by an explicit spatial isomorphism.
	\end{remark}
	
	\begin{proof}[Proof of Theorem \ref{thm: transformation}]
		In other words, $V$ is $\sqrt{n}$ times the projection onto the $\cB \varepsilon$, where $\varepsilon = (1 \oplus \dots \oplus 1) / \sqrt{n}$.
		We denote this projection by $P = V / \sqrt{n}$.
		Note that $V(e_j) = \varepsilon$ for $j = 1$, \dots, $n$.
		
		Note that $(\cH,\xi) * (\cK,e_1)$ can be written as
		\begin{equation} \label{eq: free product expansion 1}
			(\cH,\xi) * (\cK,e_1) = \left( \bigoplus_{k=0}^\infty \cK \otimes_{\cB} (\cH^\circ \otimes_{\cB} \cK^\circ)^{\otimes_{\cB} k} \otimes_{\cB} \cH, e_1 \otimes \xi \right),
		\end{equation}
		where $e_1 \otimes \xi$ is the vector in the $k = 0$ summand $\cK \otimes \cH$.  Since $\cK^\circ = \bigoplus_{j=2}^n \cB e_j$, we can express the underlying correspondence as
		\[
		\bigoplus_{k=0}^\infty \bigoplus_{j_1,\dots,j_k = 2, \dots, n} \cK \otimes_{\cB} (\cH^\circ \otimes_{\cB} \cB e_{j_1}) \otimes_{\cB} \dots \otimes_{\cB} (\cH^\circ \otimes_{\cB} \cB e_{j_k}) \otimes_{\cB} \cH.
		\]
		Thus,
		\[
		V[(\cH,\xi) * (\cK,e_1)]
		= \left( \bigoplus_{k=0}^\infty \bigoplus_{j_1,\dots,j_k = 2, \dots, n} \cB \varepsilon \otimes_{\cB} (\cH^\circ \otimes_{\cB} \cB e_{j_1}) \otimes_{\cB} \dots \otimes_{\cB} (\cH^\circ \otimes_{\cB} \cB e_{j_k}) \otimes_{\cB} \cH, e_1 \otimes \xi \right).
		\]
		
		Meanwhile, we write
		\[
		*_{j=1}^n (\cH_j,\xi_j) = \left( \bigoplus_{k=0}^\infty \bigoplus_{j_1 \neq j_2 \neq \dots \neq j_k \neq 1} \cH_{j_1}^\circ \otimes_{\cB} \dots \otimes_{\cB} \cH_{j_k}^{\circ} \otimes_{\cB} \cH_1, \xi_1 \right),
		\]
		where $\xi_1$ is the vector in the $k = 0$ summand $\cH_1$.  Let $\Phi$ be the map given on each of the summands in $*_{j=1}^n (\cH_j,\xi_j)$ as follows
		\begin{align*}
			\cH_{j_1}^\circ \otimes_{\cB} \dots \otimes_{\cB} \cH_{j_k}^{\circ} \otimes_{\cB} \cH_1 &\cong \cH^\circ \otimes_{\cB} \dots \otimes_{\cB} \cH^\circ \otimes_{\cB} \cH \\
			&\cong \cB \otimes_{\cB} (\cH^\circ \otimes_{\cB} \cB) \otimes_{\cB} \dots \otimes_{\cB} (\cH^\circ \otimes_{\cB} \cB) \otimes_{\cB} \cH \\
			&\cong \cB \varepsilon \otimes_{\cB} (\cH^\circ \otimes_{\cB} \cB e_{j_1 - j_2 + 1}) \otimes_{\cB} \dots \otimes_{\cB} (\cH^\circ \otimes_{\cB} \cB e_{j_{k-1}-j_k +1}) \otimes_{\cB} (\cH^\circ \otimes_{\cB} \cB e_{j_k}) \otimes_{\cB} \cH,
		\end{align*}
		where the indices are considered modulo $n$.  Since the string is alternating, $j_{i-1} - j_i + 1$ will never equal $1$ modulo $n$, and this is why the term on the right-hand side is one of the summands occurring in $V[(\cH,\xi) * (\cK,e_1)]$.  Note that $\Phi(\xi_1) = \varepsilon \otimes \xi$ as desired.
		
		Now we prove the intertwining relation \eqref{eq: intertwining relation}.  Fix a vector
		\[
		\theta = h_1 \otimes \dots \otimes h_k \otimes h_0 \in \cH_{j_1}^\circ \otimes_{\cB} \dots \otimes_{\cB} \cH_{j_k}^{\circ} \otimes_{\cB} \cH_1 \subseteq *_{j=1}^n (\cH_j,\xi_j).
		\]
		Let's first consider the generic case where $k > 0$.  Viewing $h_1$ as an element of $\cH$, write
		\[
		X \xi = b \xi \oplus \zeta, \qquad Xh_1 = b' \oplus h_1'
		\]
		where $b, b' \in \cB$ and $\zeta, h_1' \in \cH^\circ$.  Then we have
		\[
		X_{j_1}[h_1 \otimes \dots \otimes h_k \otimes h_0] = b'h_2 \otimes \dots \otimes h_k \otimes h_0 + h_1' \otimes h_2 \otimes \dots \otimes h_k \otimes h_0.
		\]
		Moreover, if $j \neq j_1$, then
		\[
		X_j[h_1 \otimes \dots \otimes h_k \otimes h_0] = bh_1 \oplus h_2 \otimes \dots \otimes h_k \otimes h_0 + \zeta_j \otimes h_1 \otimes h_2 \otimes \dots \otimes h_k \otimes h_0,
		\]
		where $\zeta_j$ is the copy of $\zeta$ in $\cH_j^\circ$.  Thus,
		\begin{align*}
			(X_1 + \dots + X_n)[\theta]
			&= b'h_2 \otimes \dots \otimes h_k \otimes h_0 \\
			& \quad + h_1' \otimes h_2 \otimes \dots \otimes h_k \otimes h_0 \\
			& \quad + (n-1)bh_1 \otimes h_2 \otimes \dots \otimes h_k \otimes h_0 \\
			& \quad + \sum_{j \neq j_1} \zeta_j \otimes h_1 \otimes h_2 \otimes \dots \otimes h_k \otimes h_0.
		\end{align*}
		Then applying the map $\Phi$, we get
		\begin{align*}
			\Phi \circ (X_1 + \dots + X_n)[\theta]
			&= \varepsilon \otimes (b'h_2 \otimes e_{j_2-j_3+1}) \otimes \dots \otimes (h_k \otimes e_{j_k}) \otimes h_0 \\
			& \quad + \varepsilon \otimes (h_1' \otimes e_{j_1-j_2+1}) \otimes h_2 \otimes \dots \otimes (h_k \otimes e_{j_k}) \otimes h_0 \\
			& \quad + \varepsilon \otimes ((n-1)bh_1 \otimes e_{j_1-j_2+1})) \otimes (h_2 \otimes e_{j_2-j_3+1}) \otimes \dots \otimes (h_k \otimes e_{j_k}) \otimes h_0 \\
			& \quad + \sum_{j \neq j_1} \varepsilon \otimes (\zeta \otimes e_{j-j_1+1}) \otimes (h_1 \otimes e_{j_1-j_2+1}) \otimes (h_2 \otimes e_{j_2-j_3+1}) \otimes \dots \otimes (h_k \otimes e_{j_k}) \otimes h_0.
		\end{align*}
		Furthermore, we can reindex the sum over $j \neq j_1$ of $e_{j-j_1+1}$ as the sum over $j \neq 1$ of $e_j$.
		
		Now on the other hand, we need to compute $VXV \circ \Phi$ on our original vector.  First,
		\[
		\Phi[h_1 \otimes \dots \otimes h_k \otimes h_0] = \varepsilon \otimes (h_1 \otimes e_{j_1-j_2+1}) \otimes \dots \otimes (h_k \otimes e_{j_k}) \otimes h_0.
		\]
		The vector is already in the image of $V$, so $V \Phi[\theta] = \sqrt{n} \Phi[\theta]$.  Now we compute the application of the operator $X$, which is defined by decomposing the $\cB$-$\cB$-correspondence $(\cH,\xi) * (\cK,e_1)$ into a tensor product of $\cH$ on the left with other terms.  To this end, we write
		\begin{align*}
			\sqrt{n} \Phi[\theta] &= \sum_{j=1}^n e_j \otimes (h_1 \otimes e_{j_1-j_2+1}) \otimes \dots \otimes (h_k \otimes e_{j_k}) \otimes h_0 \\
			&= (h_1 \otimes e_{j_1-j_2+1}) \otimes \dots \otimes (h_k \otimes e_{j_k}) \otimes h_0 \\
			& \quad +
			\sum_{j=2}^n e_j \otimes (h_1 \otimes e_{j_1-j_2+1}) \otimes \dots \otimes (h_k \otimes e_{j_k}) \otimes h_0,
		\end{align*}
		where we have separated out the $e_1$ term and written it without the $e_1$ because $e_1$ is the state vector.  We apply the operator $X$ and obtain
		\begin{align*}
			XV\Phi[\theta] &= b'e_{j_1-j_2+1} \otimes (h_2 \otimes e_{j_3 - j_2+1}) \otimes \dots \otimes (h_k \otimes e_{j_k}) \otimes h_0 \\
			&\quad + (h_1' \otimes e_{j_1-j_2+1}) \otimes \dots \otimes (h_k \otimes e_{j_k}) \otimes h_0 \\
			&\quad + \sum_{j=2}^n be_j \otimes (h_1 \otimes e_{j_1-j_2+1}) \otimes \dots \otimes (h_k \otimes e_{j_k}) \otimes h_0 \\
			&\quad + \sum_{j=2}^n (\zeta \otimes e_j) \otimes (h_1 \otimes e_{j_1-j_2+1}) \otimes \dots \otimes (h_k \otimes e_{j_k}) \otimes h_0.
		\end{align*}
		When we write this again in terms of the expansion \eqref{eq: free product expansion 1}, we tack on $e_1 \otimes$ to the front of the second and fourth lines.  Then applying $V$ again,
		\begin{align*}
			VXV\Phi[\theta] &= b'V(e_{j_1-j_2+1}) \otimes (h_2 \otimes e_{j_3 - j_2+1}) \otimes \dots \otimes (h_k \otimes e_{j_k}) \otimes h_0 \\
			&\quad + V(e_1) \otimes (h_1' \otimes e_{j_1-j_2+1}) \otimes \dots \otimes (h_k \otimes e_{j_k}) \otimes h_0 \\
			&\quad + \sum_{j=2}^n bV(e_j) \otimes (h_1 \otimes e_{j_1-j_2+1}) \otimes \dots \otimes (h_k \otimes e_{j_k}) \otimes h_0 \\
			&\quad + \sum_{j=2}^n V(e_1) \otimes (\zeta \otimes e_j) \otimes (h_1 \otimes e_{j_1-j_2+1}) \otimes \dots \otimes (h_k \otimes e_{j_k}) \otimes h_0.
		\end{align*}
		Since $V(e_j) = \varepsilon$ and since $b \varepsilon \otimes \phi = \varepsilon b \otimes \phi = \varepsilon \otimes b\phi$ for vectors $\phi$, our computation of $VXV \Phi[\theta]$ agrees with our previous expression for $\Phi \circ (X_1 + \dots + X_n)[\theta]$.
		
		In the cases where $k = 0$, we can further split into the case of a vector $h_0$ from $\cH^\circ$ and a multiple of $\xi$.  These arguments proceed in a similar way, and are left as an exercise for the reader.
	\end{proof}
	
	\bibliographystyle{plainnat}
	\bibliography{convolutionpowers}
	
\end{document}